\newtheorem{coro}{Corollary}[section]
\newtheorem{dfn}[coro]{Definition}
\newtheorem{lemma}[coro]{Lemma}
\newtheorem{prop}[coro]{Proposition}
\newtheorem{rem}[coro]{Remark}
\newtheorem{thm}[coro]{Theorem}
\newtheorem{ex}[coro]{Example}
\let \fl=\flushleft
\let \ep=\epsilon
\let \ga=\gamma
\let \sub=\subset
\let \be=\beta
\let \al=\alpha
\let \pr=\prime
\let \ov=\overine
\let \sub=\subset
\let \mb=\mathbb
\let \mc=\mathcal
\let \wt=\widetilde
\let \ra=\rightarrow
\let \ov=\overline
\title{Matroids and semirings attached to toric singularity arrangements}
\author{Ethan Cotterill}
\address{IMECC, Universidade Estadual de Campinas (Unicamp), Rua S\'ergio Buarque de Holanda, 651, CEP 13083-859, Campinas, SP, Brazil}
\email{cotterill.ethan@gmail.com}
\author{Cristhian Garay L\'opez}
\address{Centro de Investigaci\'on en Matem\'aticas, A.C. (CIMAT) Jalisco S/N, Col. Valenciana CP. 36023 Guanajuato, Gto, M\'exico}
\email{cristhian.garay@cimat.mx}
\begin{document}
\maketitle

\begin{abstract}
Curve singularities are classical objects of study in algebraic geometry. The key player in their combinatorial structure is the {\it value semigroup}, or its compactification, the {\it value semiring}. One natural problem is to explicitly determine the value semirings of distinguished infinite classes of singularities, with a view to understanding their asymptotic properties. In this paper, we establish a matroidal framework for resolving this problem for singularities determined by arrangements of toric branches; and we obtain precise quantitative results in the case of line arrangements. Our results have implications for the topology of Severi varieties of unisingular rational curves in projective space.
\end{abstract}

\section{Introduction}
The past ten years have been a watershed for the geometry of matroids 
in general, and combinatorial Hodge theory in particular.
While Hodge theory deals primarily with the global geometry of algebraic varieties, in this paper we introduce new combinatorial techniques in the local setting of curve singularities. We show how to associate to any essential arrangement of toric branches a sequence of matroids indexed by degrees $d \in \mb{N}$. For a distinguished subclass of these that includes line arrangements, we show via matroidal techniques that the value semirings of such toric singularity arrangements are generated by elements of minimal support in each degree $d$.

\medskip
More concretely now, fix positive integers $r\geq n \geq 2$; let $F$ denote a field of cardinality $\# F \geq r$; and fix an $n$-tuple of positive integers $m=(m_1,\dots,m_n) \in \mb{N}_{>0}^n$. Let $X\subset \mathbb{A}^n_{F}$ denote an essential arrangement of $r$ distinct toric branches of the form $(v_{1,j} t_1^{m_1},\dots, v_{n,j} t_n^{m_n})$, where $v_{i,j} \in F$ for every $i=1,\dots,n$ and $j=1,\dots,r$.
When the arrangement is linearly nondegenerate,
it determines a singularity with $r$ branches and embedding dimension $n$.

\medskip
The value semiring ${\rm S}(X)$ of the singularity ${\bf 0} \in X$ is a 
sub-semiring of $\overline{\mathbb{N}}^r$, where $\overline{\mb{N}}= (\mb{N} \cup \{\infty\},\text{min},+)$. By definition, there exists a surjection $\pi_X: F[x_1,\ldots,x_n]\ra {\rm S}(X)$. For every $d \in \mb{N}$, let $R(d):= \bigoplus_{\{I\::\:\langle I,m \rangle=d\}} F x^I$; we have $F[x_1,\ldots,x_n]= \bigoplus_{d \geq 0} R(d)$.
For each $d \in \mb{N}_{>0}$ we distinguish {\it $d$-forms of minimal support}: these are elements of $R(d)$ whose $\pi_X$-images are of minimal support with respect to set-theoretic inclusion. Our main structural result for value semirings of toric arrangements is the following.

\begin{thm}\label{semiring_generation_thm}(Theorem~\ref{semiring_generation_theorem})
Whenever $\#F \geq r$, the
$\pi_X$-images of $d$-forms, $d>0$ of minimal nonempty support of a toric arrangement $X\subset \mathbb{A}^n_F$ of distinguished type as above generate the semiring ${\rm S}(X)$. 
\end{thm}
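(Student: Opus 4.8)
The plan is to deduce the statement from the weight grading on $F[x_1,\dots,x_n]$ together with the matroid structure of $X$, in three steps: first reduce arbitrary polynomials to weight-homogeneous forms; then identify the $\pi_X$-images of degree-$d$ forms with the support data of a linear subspace $V_d\sub F^r$, bringing in the degree-$d$ matroid $M_d$ of $X$; and finally reassemble an arbitrary element of $\mathrm{S}(X)$ as a tropical $(\min,+)$-combination of $\pi_X$-images of $d$-forms of minimal support. Throughout, given $d\in\mb{N}_{>0}$ and $S\sub\{1,\dots,r\}$, write $e(d,S)\in\overline{\mb{N}}^r$ for the vector equal to $d$ in the coordinates indexed by $S$ and to $\infty$ in the remaining ones.

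\emph{Reduction to homogeneous forms.} Since the grading $F[x_1,\dots,x_n]=\bigoplus_{d\ge 0}R(d)$ is adapted to the branches of $X$ — a form $f_d\in R(d)$ has order exactly $d$ along the $j$-th branch when a scalar $c_{d,j}(f_d)\in F$ depending $F$-linearly on $f_d$ is nonzero, and order $\infty$ otherwise — forms of distinct weighted degrees never interfere in the computation of orders of vanishing. Hence, for $f=\sum_d f_d$, one has $\pi_X(f)=\min_d\pi_X(f_d)$, the minimum taken coordinatewise in $\overline{\mb{N}}^r$. Consequently $\mathrm{S}(X)$ is generated, using only the tropical addition $\min$, by $\bigcup_{d>0}\pi_X(R(d))$ (the image of $R(0)$ being the multiplicative neutral element, present in every sub-semiring), and it suffices to express each $\pi_X(f_d)$ with $f_d\in R(d)$, $d>0$, in terms of the distinguished generators.

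\emph{Passage to linear algebra and matroids.} The assignment $f_d\mapsto c(f_d)=\big(c_{d,j}(f_d)\big)_{j=1}^r$ is $F$-linear on $R(d)$; let $V_d\sub F^r$ be its image. By the previous step $\pi_X(f_d)=e\big(d,\mathrm{supp}\,c(f_d)\big)$, and $f_d$ is a $d$-form of minimal support exactly when $\mathrm{supp}\,c(f_d)$ is a minimal nonempty support of a vector of $V_d$, i.e.\ a circuit of the matroid $M_d$ associated to $V_d$. The real content — and where the hypotheses $\#F\ge r$ and ``distinguished type'' do work beyond guaranteeing that $X$ consists of $r$ distinct essential branches — lies in controlling the $M_d$: that each $M_d$ is the tractable combinatorial matroid attached to $X$ in degree $d$, and that every circuit of $M_d$ is genuinely cut out by a $d$-form of minimal support over the given and possibly small field $F$. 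Granting this via the structure theory of the $M_d$ developed earlier, one invokes the elementary fact that the support of any vector of a linear subspace is a union of circuits of the subspace's matroid: so $S:=\mathrm{supp}\,c(f_d)$ decomposes as $S=C_1\cup\dots\cup C_k$ with each $C_\ell$ a circuit of $M_d$, equivalently with each $e(d,C_\ell)$ equal to $\pi_X$ of a $d$-form of minimal support.

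\emph{Recombination.} For circuits $C_1,\dots,C_k$ of $M_d$ with union $S$, the coordinatewise minimum $\min_{1\le\ell\le k}e(d,C_\ell)$ takes the value $d$ at every coordinate lying in some $C_\ell$ — hence at every coordinate of $S$ — and the value $\infty$ at the rest, so $\min_\ell e(d,C_\ell)=e(d,S)=\pi_X(f_d)$. Thus $\pi_X(f_d)$ lies in the sub-semiring of $\overline{\mb{N}}^r$ generated by the $\pi_X$-images of $d$-forms of minimal support. Substituting into $\pi_X(f)=\min_d\pi_X(f_d)$ exhibits an arbitrary element of $\mathrm{S}(X)$ as a tropical sum of such images with $d>0$, which is precisely the claim. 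I expect the genuine obstacle to sit entirely in the middle step — identifying the matroids $M_d$ and certifying that their circuits are realised by forms defined over $F$ — the reduction to graded pieces and the final $(\min,+)$-recombination being purely formal.
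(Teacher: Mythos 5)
Your proposal is correct and follows essentially the same route as the paper's proof: reduce to weight-homogeneous pieces using the constant exponent vector $m_i=m$ (so that $\text{ord}_A(f)=\bigoplus_d\text{ord}_A(f_d)$), identify $\pi_X(f_d)$ with $d\cdot\text{Supp}(ev_{(v_1,\ldots,v_r)}(f_d))$, and invoke the matroidal structure of the supports of the image subspace in each degree; the step you defer to ``the structure theory of the $M_d$'' is exactly Proposition~\ref{prop_ph}, which rests on \cite[Theorem 3.5]{ABFG23} and is where the hypothesis $\#F\geq r$ enters (guaranteeing that the set of realized supports is closed under unions, so the semiring generated by the minimal-support images does not exceed ${\rm S}(X)$).
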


Theorem~\ref{semiring_generation_thm} opens the door to the matroidal study of these curve singularities, and in particular of multiple points. Nontrivial connected-sum decompositions of matroids generalize realizations of $r$-fold multiple points in $F^n$ as unions of $r_i$-fold multiple points in $F^{n_i}$, where $r_i<r$ and $n_i<n$. A {\it uniform} matroid generalizes a multiple point associated with a configuration for which every subconfiguration has embedding dimension equal to its number of branches. 

\medskip
Our results bear some similarities 
to those of \cite{MR}, where the authors show that the 
supports in $\mathbb{B}[x_1,\ldots,x_n]$ of those polynomials belonging to an ideal $I\subset F[x_1,\ldots,x_n]$ 
prescribe and are uniquely prescribed by the family of matroids $\{M(I^h_d):d>0\}$ of vector spaces $I^h_d\subset F[x_0,x_1,\ldots,x_n]$, where $I^h_d$ denotes the degree $d$ part of the homogenization $I^h$ of  $I$. More precisely, according to \cite[Remark 2.3]{MR2}, each $M(I^h_d)$ is the $\mathbb{B}$-semimodule generated by the polynomials Supp$(g)$ with $g \in I^h_d$; however, there may be some elements in $M(I^h_d)$ that are not realizable as the supports of elements in $I^h_d$. On this point our work differs, since by definition the value semiring ${\rm S}(X)$ consists precisely of those supports that are realized 
by elements in the local algebra of $X$. Accordingly we are forced to work with {\it scrawls} rather than with circuits.

\medskip
In the last section, we focus on the case of essential {\it line} arrangments, whose value semirings ${\rm S}(X)$ are exactly those of $r$-fold multiple points. We introduce {\it very uniform} essential line  arrangements, which generalize essential line arrangements whose underlying matroids are uniform. These satisfy a maximal-rank property for generalized Vandermonde matrices related to Veronese embeddings of $\mb{P}^{n-1}$ that is spelled out in Lemma~\ref{interpolation_lemma}.
Whenever this maximal-rank property is satisfied, we obtain the following explicit, if algorithmic, description of ${\rm S}(X)$.

\begin{thm}(Theorem~\ref{value_semiring_very_uniform_case})
Given positive integers $n$ and $r$, let $X$ denote a very uniform essential line arrangement with $r$ branches and embedding dimension $n$. Let $d$ be the minimal positive integer for which $\binom{d+n-1}{n-1} \geq r$; the conductor ${\rm S}(X)$ is then $c=(d,\dots,d)$. On the other hand, for every positive integer $d$ for which $\binom{d+n-1}{n-1} < r$, arbitrary permutations of the vector $(d,\dots,d,d-1,\dots,d-1)$ featuring $i_d \in \{d,\dots,r-1\}$ instances of $d$, is a gap of ${\rm S}(X)$; and conversely, every gap of ${\rm S}(X)$ is of this form.
\end{thm}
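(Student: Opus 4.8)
The plan is to replace the arrangement by its interpolation data and to deduce both assertions from a single membership criterion for ${\rm S}(X)$. Since a line arrangement has $m=(1,\dots,1)$, the $j$-th branch is the line spanned by a point $v_j\in\mb{P}^{n-1}(F)$, and for $f=\sum_{e\ge 0}f_e$ with $f_e\in R(e)$ one has $\pi_X(f)_j=\min\{e:f_e(v_j)\neq 0\}\in\overline{\mb{N}}$. Write $N_e=\binom{e+n-1}{n-1}=\dim_F R(e)$. By Lemma~\ref{interpolation_lemma}, ``very uniform'' means precisely that any at most $N_e$ of the points $v_1,\dots,v_r$ impose independent conditions on $R(e)$; equivalently the evaluation map $R(e)\to F^T$ is surjective whenever $|T|\le N_e$ and an isomorphism when $|T|=N_e$.

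First I would determine the $d$-forms of minimal support. A nonzero $f_d\in R(d)$ vanishes at most $N_d-1$ of the $v_j$, since $N_d$ of them already force $f_d=0$; conversely, for any $Z$ with $|Z|\le\min(N_d-1,r-1)$ some $f_d$ vanishes \emph{exactly} at $\{v_j:j\in Z\}$ --- the space of degree-$d$ forms vanishing on $Z$ has dimension $N_d-|Z|\ge1$, each further condition $f_d(v_k)=0$ with $k\notin Z$ cuts out a proper subspace by maximal rank, and (this is where $\#F\ge r$ is used essentially) a vector space over $F$ is never a union of $\le\#F$ proper subspaces. Hence the $\pi_X$-image of a $d$-form of minimal nonempty support is the vector $\chi^d_S\in\overline{\mb{N}}^r$ equal to $d$ on $S$ and to $\infty$ off $S$, with $|S|=\max(1,\,r-N_d+1)$, and every such $S$ occurs; by Theorem~\ref{semiring_generation_theorem} these $\chi^d_S$ generate ${\rm S}(X)$. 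Applying the same upper bound and the same existence statement separately at each degree $\ell$ gives the membership criterion: a vector $(a_1,\dots,a_r)\in\mb{N}^r$ lies in ${\rm S}(X)$ if and only if $\#\{j:a_j>\ell\}<N_\ell$ for every value $\ell$ occurring among $a_1,\dots,a_r$. (For the forward direction, if $a_{j_0}=\ell$ then $f_\ell$ is nonzero and vanishes at every $v_j$ with $a_j>\ell$; for the reverse, assemble $f=\sum_\ell f_\ell$ from forms realizing the vanishing pattern at each level.)

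The two conclusions then follow by counting against this criterion. Let $d$ be the least positive integer with $N_d\ge r$. If $a\ge(d,\dots,d)$, then every occurring level $\ell\ge d$ satisfies $\#\{j:a_j>\ell\}\le r-1<N_d\le N_\ell$, so $a\in{\rm S}(X)$; while if a vector $c'$ has some coordinate $\le d-1$, then --- using $N_{d-1}<r$ --- the vector equal to $d-1$ in that coordinate and $d$ elsewhere dominates $c'$ yet violates the criterion at $\ell=d-1$. Hence the conductor is exactly $(d,\dots,d)$. For the gaps, test the criterion on a vector whose entries take the two consecutive values $\de-1,\de$, say with $i$ of them equal to $\de$: the criterion can fail only at $\ell=\de-1$, and does fail precisely when $i\ge N_{\de-1}$; the vector is genuinely two-valued exactly when $1\le i\le r-1$, and such $i$ exist exactly when $N_{\de-1}<r$, i.e.\ for $1\le\de\le d$. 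This displays the asserted permutations of $(\de,\dots,\de,\de-1,\dots,\de-1)$ as gaps --- with $i$ ranging over $\{N_{\de-1},\dots,r-1\}$, which for $n=2$ is the stated range $\{\de,\dots,r-1\}$ --- and, read in the other direction, the criterion shows that no other vector having only two consecutive values in it lies outside ${\rm S}(X)$.

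The one substantive step is the analysis of minimal $d$-forms; everything after it is bookkeeping with the membership criterion. Its proof combines the very uniform hypothesis used as an \emph{upper} bound --- forms cannot over-vanish on the $d$-th Veronese images of the $v_j$ --- with $\#F\ge r$ used as a \emph{lower} bound ensuring that every sufficiently small prescribed vanishing pattern is realized by an actual form over $F$ (because $F^m$ is not a union of $\le\#F$ proper subspaces). I expect this interplay, rather than any single ingredient, to be the delicate point.
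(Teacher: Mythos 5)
Your proposal follows the same route as the paper: the paper's entire proof of Theorem~\ref{value_semiring_very_uniform_case} is the paragraph preceding it, which asserts that Lemma~\ref{interpolation_lemma} determines the possible valuative images of $P_d$ in each degree and that the conductor and gap statements are then ``immediate.'' You have carried out the computation the paper leaves implicit, and your membership criterion --- $a\in{\rm S}(X)$ if and only if $\#\{j:a_j>\ell\}<\binom{\ell+n-1}{n-1}$ for every level $\ell$ occurring among the $a_j$ --- is the correct formalization. Doing the work exposes two points where the published statement does not survive it, and you are right about the first: a nonzero form of degree $\delta-1$ vanishes at no more than $\binom{\delta+n-2}{n-1}-1$ of the $v_j$, so the two-valued vector with $i$ entries equal to $\delta$ is a gap precisely when $i\geq\binom{\delta+n-2}{n-1}$, not when $i\geq\delta$; the two thresholds coincide only for $n=2$, the case of Example~\ref{planar_case} from which the statement was evidently extrapolated. (Relatedly, the paper's ``at least $r-1-\binom{n+d-1}{n-1}$ instances of $d$'' should read $r+1-\binom{n+d-1}{n-1}$.) Second, the converse ``every gap is of this form'' cannot refer to arbitrary elements of $\mb{N}^r\setminus{\rm S}(X)$ --- the vector $(2,1,0,0)$ fails your criterion at level $0$ yet is not two-consecutive-valued --- so ``gap'' must be read as in the paper's examples, namely as an entry of a saturated chain from $\mathbf{0}$ to $c$; your verification, which treats exactly the two-consecutive-valued vectors, is what that reading requires, but you should say so explicitly rather than leave the scope of the converse implicit. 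One small loose end: for levels $\ell>d$ the very-uniform hypothesis is silent, so the surjectivity of $R(\ell)\to F^r$ that your criterion uses there needs the one-line reduction to degree $d$ obtained by multiplying by a power of a linear form nonvanishing at all the $v_j$ (which exists because $\#F\geq r$).
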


While Theorem~\ref{value_semiring_very_uniform_case} gives a complete characterization of ${\rm S}(X)$ when $X$ is a very uniform line arrangement, the combinatorics of ${\rm S}(X)$ remains somewhat mysterious whenever $n>2$. On the other hand, whenever $n=2$, it is not hard to check that the {\it genus} of $X$ (i.e., the delta-invariant of the associated singularity) is $g(X)=\binom{r}{2}$; see Example~\ref{planar_case}. We use this fact, coupled with an upper bound on the number of algebraically independent conditions imposed on rational curves by $r$-fold multiple points of embedding dimension $\ell$, to produce novel examples of {\it Severi varieties} of unisingular rational curves.

\subsection{Set-theoretic conventions} Given a positive integer $n$ and $I=(i_1,\ldots,i_n)\in \mathbb{N}^n$, we let $|I|:=i_1+\cdots+i_n$, and 
if $E$ is a finite set, we let $\binom{E}{n}$ denote the collection of cardinality-$n$ subsets of $E$. Similarly (and consistently), we let $2^E:=\{I\subset E\}$ denote the power set of $E$. We let $[r]$ denote $\{1,\ldots,r\}$. 

\section{Semirings and matroids}
\subsection{Basic notions for idempotent semirings}
Let $\mathbb{B}=(\{0,\infty\},\text{min},+)$ denote the Boolean semifield. Sometimes we write $\text{min}\{a,b\}=a\oplus b$ and $a+b=a\odot b$. 
Note that the category of idempotent semirings is precisely the category of $\mathbb{B}$-algebras, and the category of idempotent monoids 
is the category of $\mathbb{B}$-semimodules. Moreover, every idempotent semiring ${\rm S}=({\rm S},+,\times)$ is {\it sharp}, i.e., there are no nontrivial equalities $\sum_ia_i=0$ with $a_i \in {\rm S}$.

\medskip
Now fix a positive integer $r\geq1$. Note that the power set $2^{[r]}$ is a semiring with respect to $+:=\cup$ and $\times:=\cap$. The {\it support} map $\text{Supp}: {\mathbb{B}}^r \ra 2^{[r]}$ that sends $a$ to $\{i\in[r]:a_i\neq\infty\}$ is an isomorphism of semirings, so we may (and shall) identify a subset $A$ of $[r]$ with its indicator vector $a$ in $ {\mathbb{B}}^r$. Let $a^{\pr}$ denote the {\it complement} of $a\in {\mathbb{B}}^r$, i.e., the unique $b$ such that $a\oplus b=\mathbf{0}$ and $a\odot b=\boldsymbol{\infty}=(\infty,\dots,\infty)$. According to de Morgan's laws, we have $(a\oplus b)^{\pr}=a^{\pr}\odot b^{\pr}$ and $(a\odot b)^{\pr}=a^{\pr}\oplus b^{\pr}$. 

\medskip
Let $\overline{\mathbb{N}}^r=(\ov{\mathbb{N}}^r,\odot,\mathbf{0},\oplus,\boldsymbol{\infty})$ denote the idempotent semiring with operations
\[
a\odot b=(a_1+b_1,\ldots,a_r+b_r) \text{ and } a\oplus b=(\text{min} \{a_1,b_1\},\ldots,\text{min} \{a_r,b_r\})
\]
whose identities with respect to $\odot$ and $\oplus$, respectively, are
$\mathbf{0}$ and $\boldsymbol{\infty}$. Given $m\in \overline{\mathbb{N}}$ and $a\in \overline{\mathbb{N}}^r$, we write $m\cdot a=ma=(a_1+m,\ldots,a_r+m)$, and we let $\mathbf{m}=(m,\ldots,m)\in \overline{\mathbb{N}}^r$. 
The idempotent order on $\overline{\mathbb{N}}^r$ is defined by $a\leq b$ if and only if $a\oplus b=a$; 
equivalently, this means that
\begin{equation}
\label{eq_product_order}
    a\leq b \Leftrightarrow a_i\leq b_i \text{ for every } i=1,\ldots, r.
\end{equation}
In other words, the idempotent order on $\overline{\mathbb{N}}^r$ is the product of $r$ copies of the usual order on $\ov{\mathbb{N}}$. Note that $\mathbf{0}\leq a \leq \boldsymbol{\infty}$ for all $a\in \overline{\mathbb{N}}^r$.

\medskip
The support map extends in an obvious way to give ``support" maps $\overline{\mathbb{N}}^r \ra 2^{[r]}$ and $\overline{\mathbb{N}}^r \ra  {\mathbb{B}}^r$. Indeed, viewed as an element of $\mb{B}^r$, the support $\text{Supp}(a)$ of $a\in \overline{\mathbb{N}}^r$ is the vector in $\mathbb{B}^r$ whose $i$-th coordinate equal to $0$ if $a_i\neq\infty$ and $\infty$ otherwise. When we want to distinguish the support valued in $2^{[r]}$, we will follow \cite{CH} and write $I_a$ in place of $\text{Supp}(a)$.

\medskip
We say that $a\in \overline{\mathbb{N}}^r$ is {\it homogeneous} if $a=d\cdot\text{Supp}(a)$ for some $d\neq\infty$, which we also denote by $\text{deg}(a)$. 
As products (with respect to $\odot$) of homogeneous elements are clearly homogeneous, any $a\in \overline{\mathbb{N}}^r$ may be expressed uniquely as a sum (with respect to $\oplus$) of homogeneous elements; and $\overline{\mathbb{N}}^r$ decomposes as a direct sum of homogeneous pieces.

\begin{rem}
\label{rem_subsemring}
The Boolean semifield $\mathbb{B}^r$ embeds naturally as a sub-semiring of $\overline{\mb{N}}^r$, namely as its degree-zero piece. 
\end{rem}

A subsemiring ${\rm S} \sub \overline{\mb{N}}^r$ is {\it finitely generated} if there exist finitely many
$\phi_1,\dots,\phi_m \in \overline{\mb{N}}^r$ in terms of which every element of ${\rm S}$ may be represented as $\phi^{\odot I_1}\oplus \cdots \oplus \phi^{\odot I_k}$,
where $\phi^{\odot I}=\phi_1^{\odot i_1}\odot\cdots\odot\phi_m^{\odot i_m}$ whenever $I=(i_1,\dots,i_m)$. 
This means that ${\rm S}$ is the image of the semiring morphism  $\mathbb{B}[x_1,\ldots,x_m] \ra \overline{\mathbb{N}}^r$ sending $x_i$ to $\phi_i$; so as a shorthand we write ${\rm S}=\mathbb{B}[\phi_1,\ldots,\phi_m]$.

\medskip
The {\it conductor} of a subsemiring ${\rm S} \sub \overline{\mb{N}}^r$ is the smallest $r$-tuple $c=c({\rm S})\in\overline{\mb{N}}^r$ 
with the property that every $a\in \overline{\mb{N}}^r$ with $a \geq c$ belongs to ${\rm S}$, provided such an element exists. 

\subsection{Basic notions for matroids}\label{Section_Matroids}
We follow \cite{JO} and \cite[Ch. 2.2]{Welsh}. A matroid is a combinatorial object that satisfies several cryptomorphic sets of axioms. Two of these formulations will be relevant in the sequel.
Hereafter we fix positive integers $r\geq n\geq2$.

\medskip
The most familiar way of specifying a matroid is in terms of a {\it rank function} $\rho:2^{[r]}\ra \mathbb{N}$ for which $\rho(\emptyset)=0$ and that moreover satisfies:
\begin{enumerate}
    \item $\rho(A\cup B)+\rho(A\cap B)\leq \rho(A)+\rho(B)$ for all $A,B\subset [r]$; and 
    \item $\rho(A)\leq \rho(A\cup\{i\})\leq \rho(A)+1$ for every $A\subset [r]$ and $i\in[r]\setminus A$.
\end{enumerate}
The pair $M=([r],\rho)$ is then a matroid of {\it rank} equal to $\rho([r])$. An {\it independent set} of $M$ is a subset of $[r]$ whose rank equals its cardinality; any non-independent subset of $[r]$ is {\it dependent}. A {\it basis} is a maximal independent set and a {\it circuit} of $M$ is a minimal dependent subset. An index 
$i \in [r]$ is a {\it loop} whenever it lies outside every basis of $M$. Likewise, a subset of $[r]$ is a {\it flat} of $M$ whenever it is maximal for its rank. A basic fact is that the flats of $M$ form a lattice, which we denote by $L(M)$.

\begin{dfn}
   A matroid $M=([r],\rho)$ is {\it simple} provided $\rho(i)=1$ for every $i\in [r]$, and $\rho(i,j)=2$ for every $i\neq j\in [r]$. 
\end{dfn}

Every simple matroid is determined by its lattice of flats. Moreover, every matroid $M=([r],\rho)$ possesses a canonical {\it simplification} $\widetilde{M}=([s],\rho’)$,
where $[s]=([r]-L(M))/\sim$, $L(M)$ is the set of loops of $M$, and $\sim$ identifies parallel elements; see \cite{MB}. The characteristic property of $\wt{M}$ is that the induced map $\sigma:2^{[r]}\xrightarrow[]{}2^{[s]}$ makes the following diagram commutative:
\begin{equation}    \xymatrix{2^{[r]}\ar[r]^{\rho}\ar[dr]_{\sigma}&\mathbb{N}\\
    &2^{[s]}\ar[u]_{{\rho'}}}
\end{equation}

An alternative and relatively new way of specifying matroids is via sets of {\it scrawls}, as in \cite{ABFG23}. These are more convenient when working with matroids induced by families of supports of vector spaces, as we will see later.

\begin{dfn}
\label{dfn_sea}
    A subset $\mathcal{S}$ of $ 2^{[r]}$ is a {set of scrawls} if it satisfies the following axioms:
\begin{enumerate}
\item Every scrawl is non-empty;
    \item $\mathcal{S}$ is a semigroup with respect to unions, i.e. any union of elements in $\mathcal{S}$ belongs to $\mathcal{S}$; and
    \item $\mathcal{S}$ satisfies the {scrawl} elimination axiom: Given distinct scrawls $S_1,S_2$ and $e \in S_1 \cap S_2$, then $(S_1 \cup S_2) \setminus \{ e \}$ contains a scrawl.
\end{enumerate}
\end{dfn}

Circuits and scrawls are closely related. Indeed, the minimal elements of a set of scrawls satisfy the axioms for circuits, while the set of unions of circuits satisfy the axioms for scrawls. \footnote{In \cite{MR2}, the term {\it cycle} is used in place of scrawl.}

\begin{rem}
Circuits and scrawls may be characterized algebraically. Indeed, via the semiring isomorphism $2^{[r]}\xrightarrow[]{}\mathbb{B}^r$, a $\mathbb{B}$-semimodule $M=(M,\boldsymbol{\infty},\oplus)$ is matroidal if and only if $M\setminus\{\boldsymbol{\infty}\}$ satisfies the scrawl elimination axiom from Definition \ref{dfn_sea}. Consequently, sets of scrawls are equivalent to matroidal  $\mathbb{B}$-semimodules, and elements $a\in M\setminus\{\boldsymbol{\infty}\}$ of minimal support  are additively irreducible, in the sense that for every factorization $a=b\oplus c$, we have $a=b$ or $b=c$. See Definition ~\ref{maximal_irreducible} for the corresponding notion of multiplicative irreducibility.
\end{rem}

\medskip
In what follows, we will make critical use of the following result \cite[Theorem 3.5]{ABFG23}: For every $F$-vector space $W\subset F^r$ with $\#F\geq r$, $\text{Supp}(W)\subset \mathbb{B}^r$ is a matroidal  $\mathbb{B}$-semimodule. It follows, in particular, that the set $C$ of elements of minimal support in $\text{Supp}(W\setminus\{0\})$ is the set of circuits of a matroid, and $\text{Supp}(W)$ is the matroidal  $\mathbb{B}$-semimodule spanned by (unions of elements in) $C$.

\section{Singularities with toric branches}\label{sings_with_toric_branches}
It is well-known that over $\mb{C}$ every cusp degenerates in a flat family to a toric branch, i.e., to a cusp parameterized by monomials in a uniformizing parameter; see \cite[p.49]{T} for an explicit construction that works over an arbitrary algebraically closed field. 
Degenerating branch by branch, it follows that {\it any} curve singularity degenerates in a flat family to a union of toric branches, and singularities with toric branches comprise a distinguished class.\footnote{It seems reasonable to further speculate that {\it every} value semigroup of a singularity arises from a singularity with toric branches.} 

\medskip
Now suppose $F$ is a field and that $X \sub \mb{A}^n_{F}$ is a singularity of embedding dimension $n\geq2$, with $r\geq1$ toric branches. For $i=1,\dots,r$, let $t_i\mapsto (v_{i,1}t_i^{m_{i,1}},\ldots,v_{i,n}t_i^{m_{i,n}})$ denote a parameterization of the $i$-th branch, where $v_{i,j}\in F$ and $m_{i,j}\in\mathbb{N}$. 
Let $A=(v_{i,j}t_i^{m_{i,j}})_{i,j}$  denote the associated $r \times n$ matrix, 
and for every $i=1,\ldots,r$, let $v_i=(v_{i,1},\ldots,v_{i,n})$ and $m_i=(m_{i,1},\ldots,m_{i,n})$. By convention, we set $m_{i,j}:=0$ whenever $v_{i,j}=0$.


\begin{dfn} Let $\mu_A:F[x_1,\ldots,x_n]\xrightarrow{} \prod_{i=1}^r F[\![t_i]\!]$ denote the map obtained by $F$-linearly extending the assignment $x_j\mapsto(v_{1,j}t_1^{m_{1,j}},\ldots,v_{r,j}t_r^{m_{r,j}})$ for $j=1,\ldots,n$.
\end{dfn}

This is a homomorphism of rings since it is an evaluation map. We now describe the map $\mu_A$. Given $f=\sum_Ia_Ix^I\in F[x_1,\ldots,x_n]$ and $i=1,\ldots,r$, let $\sum_{s\geq0 }f_s^{(i)}(x)$ denote the graded decomposition of $f$ with respect to the weight vector $(|x_1|_i,\ldots,|x_n|_i)=(m_{i,1},\ldots,m_{i,n})$. 


\begin{prop}\label{muAf} Let $A=(v_{i,j}t_i^{m_{i,j}})_{i,j}$ be as above and let $f=\sum_Ia_Ix^I\in F[x_1,\ldots,x_n]$. We have

\begin{equation}\label{eq_muAf}
\mu_A(f)=\biggl(\sum_sf_s^{(1)}(v_1)t_1^s,\ldots,\sum_sf_s^{(r)}(v_r)t_r^s\biggr).
\end{equation}
\end{prop}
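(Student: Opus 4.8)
The plan is to unwind the definition of $\mu_A$ as an evaluation map and observe that evaluating a single monomial $x^I$ at the $i$-th branch separates into a scalar factor coming from the $v_i$ and a pure power of $t_i$ whose exponent is exactly the weighted degree $\langle I, m_i\rangle = \sum_j i_j m_{i,j}$. First I would fix $i \in \{1,\dots,r\}$ and compute $\mu_A(x^I)$ in the $i$-th coordinate: since $\mu_A$ is a ring homomorphism sending $x_j$ to $(v_{1,j}t_1^{m_{1,j}},\dots,v_{r,j}t_r^{m_{r,j}})$, the $i$-th component of $\mu_A(x^I)$ is $\prod_{j=1}^n (v_{i,j}t_i^{m_{i,j}})^{i_j} = \bigl(\prod_j v_{i,j}^{i_j}\bigr)\, t_i^{\sum_j i_j m_{i,j}} = v_i^I\, t_i^{\langle I,m_i\rangle}$, using multi-index notation $v_i^I := \prod_j v_{i,j}^{i_j}$. (One should note the harmless convention $m_{i,j}=0$ when $v_{i,j}=0$, so that a zero scalar is never multiplied by a negative or fractional power; the monomial simply vanishes in that coordinate, consistently with $v_i^I = 0$.)

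Next I would extend $F$-linearly. Writing $f = \sum_I a_I x^I$, linearity of $\mu_A$ gives that the $i$-th coordinate of $\mu_A(f)$ equals $\sum_I a_I v_i^I\, t_i^{\langle I, m_i\rangle}$. Now I would group the terms according to the weighted degree with respect to the weight vector $(m_{i,1},\dots,m_{i,n})$: by definition of the graded decomposition $f = \sum_{s\geq 0} f_s^{(i)}$, the homogeneous piece $f_s^{(i)}$ collects exactly those monomials $a_I x^I$ with $\langle I,m_i\rangle = s$. Hence $\sum_I a_I v_i^I\, t_i^{\langle I,m_i\rangle} = \sum_{s\geq 0} \bigl(\sum_{\langle I,m_i\rangle = s} a_I v_i^I\bigr) t_i^s = \sum_{s\geq 0} f_s^{(i)}(v_i)\, t_i^s$, where the inner sum is precisely $f_s^{(i)}$ evaluated at the point $v_i = (v_{i,1},\dots,v_{i,n})$. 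Assembling the $r$ coordinates yields formula~\eqref{eq_muAf}.

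There is no serious obstacle here — the statement is essentially a bookkeeping identity — but the one point that deserves care is the interaction between the convention $m_{i,j}:=0$ when $v_{i,j}=0$ and the claim that the exponent of $t_i$ in the $x^I$-term is $\langle I,m_i\rangle$. I would spell out that if $v_{i,j}=0$ and $i_j>0$ then both sides of the per-monomial identity vanish (the left because $v_i^I=0$, the right because the factor $(v_{i,j}t_i^{0})^{i_j}=0$), so the convention does not corrupt the degree computation; and if $i_j = 0$ the variable $x_j$ does not appear and contributes nothing regardless. A second minor point worth a sentence is well-definedness of the graded decomposition: the weighted-degree grading on $F[x_1,\dots,x_n]$ induced by any vector in $\mathbb{N}^n$ is genuine, so $f = \sum_s f_s^{(i)}$ is a finite unambiguous sum, and the reindexing of the double sum above is legitimate.
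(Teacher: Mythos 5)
Your proof is correct and follows essentially the same route as the paper's: compute $\mu_A(x^I)$ coordinatewise to get $v_i^I t_i^{\langle I,m_i\rangle}$, extend by $F$-linearity, and regroup the sum by weighted degree to recognize $f_s^{(i)}(v_i)$. Your extra remarks on the $m_{i,j}=0$ convention and the well-definedness of the grading are harmless additions but do not change the argument.
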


\begin{proof} Given $I=(i_1,\ldots,i_n)\in\mathbb{N}^n$, we have 
\[
\mu_A(x^I)=\bigl({v_1}^It_1^{\left<I,m_1\right>},\ldots,{v_r}^It_r^{\left<I,m_r\right>}\bigr)
\]
where $v_i=(v_{i,1},\ldots,v_{i,n})$ and $m_i=(m_{i,1},\ldots,m_{i,n})$. More generally, for every $f=\sum_Ia_Ix^I\in F[x_1,\ldots,x_n]$, the fact that $\mu_A$ is $F$-linear in monomial arguments implies that
\[
\begin{split}
\mu_A(f)&=
\sum_Ia_I\bigl({v_1}^It_1^{\left<I,m_1\right>},\ldots,{v_r}^It_r^{\left<I,m_r\right>}\bigr)\\
&=\biggl(\sum_s(\sum_{\left<I,m_1\right>=s}a_I{v_1}^I)t_1^{s},\ldots,\sum_s(\sum_{\left<I,m_r\right>=s}a_I{v_r}^I)t_r^{s}\biggr)\\
&=\biggl(\sum_sf_s^{(1)}(v_1)t_1^s,\ldots,\sum_sf_s^{(r)}(v_r)t_r^s\biggr).\qedhere
\end{split}
\]
\end{proof}

Now let $\text{ord}:\prod_{i=1}^r F[\![t_i]\!]\xrightarrow{}\overline{\mathbb{N}}^r$ denote the map that sends $(g_1(t_1),\ldots,g_r(t_r))$ to $(\text{ord}_{t_1}(g_1(t_1)),\ldots,\text{ord}_{t_r}(g_r(t_r)))$, where $\text{ord}_t:F[\![t]\!] \xrightarrow{}\overline{\mathbb{N}}$ is the $t$-adic valuation. Let $\text{ord}_A:F[x_1,\ldots,x_n]\xrightarrow{}\overline{\mathbb{N}}^r$ denote the composition $\text{ord}\circ\mu_A$. 

We recall that $\overline{\mathbb{N}}^r$ is the idempotent semiring endowed with coordinate-wise tropical addition $\oplus$ and multiplication $\odot$, and with the order $\leq$ characterized by
$a\leq b $ if and only if $a\oplus b=a$; this in turn yields the product order derived from the usual order on $\overline{\mathbb{N}}$; that is,
$a\leq b$ if and only if $a_i\leq b_i$ for every $i=1,\ldots, r$.

\begin{prop}\label{Prop_Valuation}
The map $\text{ord}_A:F[x_1,\ldots,x_n]\xrightarrow{} \overline{\mathbb{N}}^r$ is a non-archimedean valuation.
\end{prop}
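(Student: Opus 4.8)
The plan is to verify the three axioms for a non-archimedean valuation $\nu = \mathrm{ord}_A$ on the ring $R = F[x_1,\dots,x_n]$ taking values in the idempotent semiring $\overline{\mathbb{N}}^r$: namely (i) $\nu(f) = \boldsymbol{\infty}$ iff $f$ lies in the kernel/zero locus relevant here (with $\nu(0)=\boldsymbol{\infty}$), (ii) $\nu(fg) = \nu(f)\odot\nu(g)$, and (iii) $\nu(f+g) \geq \nu(f)\oplus\nu(g)$, i.e.\ the ultrametric inequality in each coordinate. Since $\mathrm{ord}_A = \mathrm{ord}\circ\mu_A$ with $\mu_A\colon R \to \prod_{i=1}^r F[\![t_i]\!]$ a ring homomorphism (established just above via Proposition~\ref{muAf}), and $\mathrm{ord}$ is built coordinatewise from the $t$-adic valuations $\mathrm{ord}_{t_i}\colon F[\![t_i]\!]\to\overline{\mathbb{N}}$, the whole statement reduces to the classical fact that each $\mathrm{ord}_{t_i}$ is a non-archimedean valuation on the power series ring $F[\![t_i]\!]$, together with the observation that composing a valuation with a ring homomorphism yields a valuation, and that a product (over $i$) of valuations is again a valuation into the product semiring $\overline{\mathbb{N}}^r$ with its coordinatewise operations.

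Concretely, I would first recall that for a single variable $t$, $\mathrm{ord}_t(gh) = \mathrm{ord}_t(g) + \mathrm{ord}_t(h)$ and $\mathrm{ord}_t(g+h) \geq \min\{\mathrm{ord}_t(g),\mathrm{ord}_t(h)\}$, with $\mathrm{ord}_t(g) = \infty$ iff $g = 0$; these are standard. Next, because $\mu_A$ is a ring homomorphism, $\mu_A(fg) = \mu_A(f)\mu_A(g)$ and $\mu_A(f+g) = \mu_A(f)+\mu_A(g)$ in $\prod_i F[\![t_i]\!]$, where multiplication and addition are coordinatewise. Applying $\mathrm{ord}$ and using the single-variable facts coordinate by coordinate gives $\mathrm{ord}_A(fg) = \mathrm{ord}_A(f)\odot\mathrm{ord}_A(g)$ and $\mathrm{ord}_A(f+g) \geq \mathrm{ord}_A(f)\oplus\mathrm{ord}_A(g)$, using the characterization~\eqref{eq_product_order} of $\leq$ on $\overline{\mathbb{N}}^r$ as the product order. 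Finally, $\mathrm{ord}_A(1) = \mathbf{0}$ since $\mu_A(1) = (1,\dots,1)$ and each $\mathrm{ord}_{t_i}(1) = 0$.

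The only genuinely substantive point — and the one I would treat as the main obstacle, modest as it is — is the behavior of the "$=\boldsymbol{\infty}$" condition: a valuation in this generalized, semiring-valued sense need not have trivial support, i.e.\ $\mathrm{ord}_A(f)$ may have some coordinates equal to $\infty$ without $f$ being zero (precisely when $f$ vanishes identically on some branches but not others). One should therefore be careful to state axiom (i) in the correct form for a non-archimedean valuation valued in $\overline{\mathbb{N}}^r$: $\mathrm{ord}_A(0) = \boldsymbol{\infty}$, and $\mathrm{ord}_A(f) = \boldsymbol{\infty}$ forces $\mu_A(f) = 0$, which by Proposition~\ref{muAf} means $f_s^{(i)}(v_i) = 0$ for all $s$ and all $i$; this is the condition that $f$ lies in the ideal of the arrangement $X$, so $\mathrm{ord}_A$ descends to a valuation on the local algebra, exactly as needed for the definition of ${\rm S}(X)$. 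I would phrase the proof so that this compatibility with the support/kernel is recorded explicitly, then close by noting that all three axioms have been checked coordinatewise, completing the argument. $\qed$
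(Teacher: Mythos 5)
Your proposal is correct and follows essentially the same route as the paper: the paper likewise reduces the claim to the valuation axioms of \cite{GG} (namely $\text{ord}_A(0)=\boldsymbol{\infty}$, $\text{ord}_A(\pm1)=\mathbf{0}$, the ultrametric inequality, and multiplicativity) and observes that each follows immediately from the factorization $\text{ord}_A=\text{ord}\circ\mu_A$ through the ring homomorphism $\mu_A$ together with the coordinatewise $t_i$-adic orders. Your additional remarks on the possibly nontrivial support of $\text{ord}_A(f)$ are a sound elaboration but not part of the paper's (much terser) argument.
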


\begin{proof}
Following \cite{GG}, it suffices to show that
\begin{enumerate}
    \item $\text{ord}_A(0)=\boldsymbol{\infty}$;
    \item $\text{ord}_A(1)=\text{ord}_A(-1)=\mathbf{0}$;
    \item $\text{ord}_A(f+g)\geq \text{ord}_A(f)\oplus \text{ord}_A(g)$; and
    \item $\text{ord}_A(fg)=\text{ord}_A(f)\odot \text{ord}_A(g)$.
\end{enumerate}
Each of these properties follows immediately from the fact that $ord_A=ord\circ \mu_A$.
\end{proof}

It follows from Proposition~\ref{Prop_Valuation} that 
$\text{ord}_A(f+g)_i=\text{ord}_A(f)_i\oplus \text{ord}_A(g)_i$ whenever $\text{ord}_A(f)_i\neq \text{ord}_A(g)_i$, and 
$\text{ord}_A(f+g)=\text{ord}_A(f)\oplus \text{ord}_A(g)$ whenever $\text{ord}_A(f)_i\neq \text{ord}_A(g)_i$ for every $i=1,\ldots,r$.
The following result characterizes $\text{ord}_A$ explicitly.

\begin{coro} Let $A=(v_{i,j}t_i^{m_{i,j}})_{i,j}$ be as above. Given $f=\sum_Ia_Ix^I\in F[x_1,\ldots,x_n]$, let $\mu_A(f)$ be as in equation \eqref{eq_muAf}. We have
\begin{equation}
\label{p_explicit} 
    \text{ord}_A(f)_i=ord_{t_i}\biggl(\sum_sf_s^{(i)}(v_i)t_i^s\biggr)=\begin{cases}
        \text{min}\{s:f_s^{(i)}(v_i)\neq0\},&\text{ if such a minimal }s\text{ exists};\\
    \infty &\text{otherwise}.\\
    \end{cases}
\end{equation}
\end{coro}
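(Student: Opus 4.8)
The plan is to obtain the formula by simply chaining together the two descriptions already in hand: the explicit expression for $\mu_A(f)$ from Proposition~\ref{muAf}, and the definition of the $t$-adic valuation. There is no genuine obstacle here; the corollary is a direct unwinding of definitions together with the preceding proposition, and I would present it as such.

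First I would recall that $\text{ord}_A$ was defined as the composition $\text{ord}\circ\mu_A$, so that for each $i=1,\ldots,r$ we have $\text{ord}_A(f)_i=\text{ord}_{t_i}\bigl(\mu_A(f)_i\bigr)$, where $\text{ord}_{t_i}$ is the $t_i$-adic valuation on $F[\![t_i]\!]$. Next I would invoke equation~\eqref{eq_muAf} of Proposition~\ref{muAf}, which identifies the $i$-th component $\mu_A(f)_i$ with the element $\sum_{s\geq 0}f_s^{(i)}(v_i)\,t_i^s$ of $F[\![t_i]\!]$; here $f=\sum_s f_s^{(i)}$ is the graded decomposition of $f$ with respect to the weight vector $(m_{i,1},\ldots,m_{i,n})$, and each coefficient $f_s^{(i)}(v_i)\in F$ is obtained by evaluating the polynomial $f_s^{(i)}$ at $v_i\in F^n$. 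Since $f$ is a polynomial, only finitely many $f_s^{(i)}$ are nonzero, so $\mu_A(f)_i$ is a well-defined (indeed polynomial) element of $F[\![t_i]\!]$; the convention $m_{i,j}:=0$ when $v_{i,j}=0$ is harmless, as any monomial actually involving $x_j$ contributes $0$ to $f_s^{(i)}(v_i)$ in that case.

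Finally I would apply the definition of the valuation $\text{ord}_{t_i}$: for a nonzero power series $\sum_s c_s t_i^s$ it returns $\min\{s:c_s\neq 0\}$, and it returns $\infty$ on the zero series. Taking $c_s=f_s^{(i)}(v_i)$ gives precisely $\text{ord}_A(f)_i=\min\{s:f_s^{(i)}(v_i)\neq 0\}$ when such a minimal $s$ exists, and $\text{ord}_A(f)_i=\infty$ otherwise — exactly the asserted formula \eqref{p_explicit}. The one point deserving a word of care is the bookkeeping in the middle step, namely that the index $s$ of the $t_i$-adic expansion of $\mu_A(f)_i$ coincides with the index $s$ of the weighted-homogeneous decomposition of $f$; but this matching is exactly the content of Proposition~\ref{muAf}, so nothing further is needed.
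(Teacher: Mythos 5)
Your proposal is correct and follows exactly the paper's route: the paper's proof is the one-line observation that the corollary follows from Proposition~\ref{muAf} together with the definition of $\text{ord}_A=\text{ord}\circ\mu_A$, which is precisely the chain of substitutions you carry out in more detail. Nothing is missing; you have simply made the unwinding explicit.
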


\begin{proof} The result follows from Propositions~\ref{muAf} and \ref{Prop_Valuation}.
\end{proof}




\medskip
Suppose now that exponent vectors of the matrix $A=(v_{i,j}t_i^{m_{i,j}})_{i,j}$ are constant, i.e., that $m_i=m$ for every $i=1,\ldots,r$. We have 
$F[x_1,\ldots,x_n]= \bigoplus_{d \geq 0} R(d)$, where $R(d):= \bigoplus_{I: \langle I,m \rangle=d} F x^I$ for every $d \in \mb{N}$. Recall from the introduction that for each $d \in \mb{N}_{>0}$, a {\it $d$-form of minimal support} is any element of $R(d)$ whose $\text{ord}_A$-image is of minimal support. Let $ev_{(v_1,\ldots, v_r)}$ denote the linear evaluation map $R(d)\to F^r$ given by $f\mapsto (f(v_1),\ldots,f(v_r))$; and let ${\rm S}(X,d):=\text{ord}_A(R(d))$.

\begin{prop}
\label{prop_ph}
Whenever $\#F\geq r$, we have that ${\rm S}(X,d)=\text{ord}_A(R(d))\subset\overline{\mathbb{N}}^r$ is a matroidal  $\mathbb{B}$-semimodule spanned by 
\[
\left\{ d\cdot\text{Supp}(ev_{(v_1,\ldots, v_r)}(f)):\text{Supp}(ev_{(v_1,\ldots, v_r)}(f))\in\mathbb{B}^r \setminus \{\boldsymbol{\infty}\} \text{ is minimal}\right\}.
\]
\end{prop}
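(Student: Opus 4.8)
The plan is to reduce everything to the support description of a linear subspace of $F^r$ via the evaluation map, and then invoke the cited Theorem 3.5 of \cite{ABFG23}. First I would observe that because all exponent vectors coincide, $m_i = m$, formula \eqref{p_explicit} simplifies dramatically on $R(d)$: for $f \in R(d)$ we have $f = f^{(i)}_d$ for every $i$ (the weight-$d$ graded piece is all of $R(d)$ simultaneously), so $\text{ord}_A(f)_i = d$ if $f(v_i) \neq 0$ and $\text{ord}_A(f)_i = \infty$ if $f(v_i) = 0$. In other words, $\text{ord}_A(f) = d \cdot \text{Supp}(ev_{(v_1,\dots,v_r)}(f))$, viewing the right-hand side as the element of $\overline{\mathbb{N}}^r$ that is $d$ on the support of the evaluation vector and $\infty$ elsewhere. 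Hence $\text{ord}_A$ restricted to $R(d)$ factors as $R(d) \xrightarrow{ev} F^r \xrightarrow{d\cdot\text{Supp}} \overline{\mathbb{N}}^r$, and consequently ${\rm S}(X,d) = \text{ord}_A(R(d)) = d \cdot \text{Supp}(W)$ where $W := ev_{(v_1,\dots,v_r)}(R(d)) \subset F^r$ is a linear subspace.

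Next I would apply \cite[Theorem 3.5]{ABFG23}: since $\#F \geq r$, $\text{Supp}(W) \subset \mathbb{B}^r$ is a matroidal $\mathbb{B}$-semimodule, spanned (as a $\mathbb{B}$-semimodule, i.e. under $\oplus = $ union of supports) by its elements of minimal nonempty support — these are the circuits of the associated matroid $M(W)$. The map $d \cdot (-) : \mathbb{B}^r \to \overline{\mathbb{N}}^r$ is, on the degree-$0$ piece $\mathbb{B}^r \subset \overline{\mathbb{N}}^r$, simply multiplication by the scalar $d \in \overline{\mathbb{N}}$, hence a $\mathbb{B}$-semimodule homomorphism onto its image (it is injective on supports: $d \cdot a = d \cdot b$ forces $a = b$ since $d \neq \infty$, and it commutes with $\oplus$ because $d\cdot(a\oplus b) = d\cdot a \oplus d\cdot b$ coordinatewise). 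Therefore the image ${\rm S}(X,d) = d\cdot \text{Supp}(W)$ is again a matroidal $\mathbb{B}$-semimodule — one checks the scrawl elimination axiom of Definition~\ref{dfn_sea} transports across this isomorphism, or equivalently invokes the algebraic characterization in the Remark following Definition~\ref{dfn_sea} — and it is spanned by the images $d \cdot \text{Supp}(ev_{(v_1,\dots,v_r)}(f))$ of the minimal-support elements, which is exactly the asserted generating set $\{d\cdot\text{Supp}(ev_{(v_1,\dots,v_r)}(f)) : \text{Supp}(ev_{(v_1,\dots,v_r)}(f)) \text{ minimal nonempty}\}$.

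The one point requiring care — and the main (mild) obstacle — is the bookkeeping around the degenerate/zero cases and the precise meaning of "minimal": one must exclude $f$ with $ev(f) = 0$ (whose support is $\boldsymbol{\infty}$, the zero of the $\mathbb{B}$-semimodule, correctly matching the exclusion $\text{Supp}(ev(f)) \in \mathbb{B}^r \setminus \{\boldsymbol{\infty}\}$ in the statement), and note that the arrangement being essential guarantees no branch has $v_i = 0$, so no coordinate is a loop forced by the construction. I would also remark that "$d$-form of minimal support" in the sense of the introduction coincides here with "$f$ such that $\text{Supp}(ev(f))$ is minimal nonempty", precisely because $\text{ord}_A(f) = d\cdot\text{Supp}(ev(f))$ and $d\cdot(-)$ is support-order-preserving. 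With these identifications in place the proposition is immediate from \cite[Theorem 3.5]{ABFG23}.
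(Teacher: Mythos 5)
Your proposal is correct and follows essentially the same route as the paper: reduce via equation~\eqref{p_explicit} to the identity $\text{ord}_A(f)=d\cdot\text{Supp}(ev_{(v_1,\ldots,v_r)}(f))$ on $R(d)$, then invoke \cite[Theorem 3.5]{ABFG23} for the linear subspace $ev_{(v_1,\ldots,v_r)}(R(d))\subset F^r$. Your additional verification that $d\cdot(-)$ is a $\mathbb{B}$-semimodule isomorphism onto its image (so the matroidal property and the generating set transport correctly) is a detail the paper leaves implicit, but it is the same argument.
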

\begin{proof}
    Assume that $f\in R(d)$. According to equation~\eqref{p_explicit}, we have 
    \[
        \text{ord}_A(f)_i=\begin{cases}
            d,&\text{ if }f(v_i)\neq0,\\
            \infty,&\text{ otherwise.}\\
        \end{cases}
    \]
Thus $\text{ord}_A(f)=d\cdot\text{Supp}(ev_{(v_1,\ldots, v_r)}(f))$. On the other hand, whenever $\#F\geq r$, \cite[Theorem 3.5]{ABFG23} establishes that $\text{Supp}(ev_{(v_1,\ldots, v_r)}(R(d))$ coincides with the matroidal  $\mathbb{B}$-semimodule generated by the set of elements of minimal support.
\end{proof}

\begin{thm}\label{semiring_generation_theorem}
Whenever $\#F\geq r$, the $\pi_X$-images of $d$-forms, $d>0$ of minimal nonempty support of a toric arrangement $X\subset \mathbb{A}^n_F$ of distinguished type as above generate the semiring ${\rm S}(X)$.
\end{thm}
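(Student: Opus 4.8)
The plan is to reduce the assertion to the degreewise statement of Proposition~\ref{prop_ph} by showing that the value of an arbitrary polynomial is the tropical sum of the values of its weight-homogeneous components. Throughout, I identify $\pi_X$ with $\text{ord}_A\colon F[x_1,\dots,x_n]\to\overline{\mathbb{N}}^r$, so that ${\rm S}(X)=\text{ord}_A(F[x_1,\dots,x_n])$, and I use that ``distinguished type'' means the exponent vectors of $A$ are constant, $m_i=m$, so that $F[x_1,\dots,x_n]=\bigoplus_{d\geq 0}R(d)$ with $R(d)=\bigoplus_{\langle I,m\rangle=d}Fx^I$.

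First I would establish the graded splitting of $\text{ord}_A$. Fix $f=\sum_{d\geq 0}f_d$ with $f_d\in R(d)$. Because $m_i=m$ for every $i$, the graded decomposition of $f$ with respect to the weight vector $(m_{i,1},\dots,m_{i,n})$ is the same for every branch, namely $f=\sum_d f_d$; so equation~\eqref{p_explicit} gives $\text{ord}_A(f)_i=\text{min}\{d: f_d(v_i)\neq 0\}$ for each $i$, with the usual convention $\text{min}\emptyset=\infty$. Since each $f_d$ is weight-homogeneous, the same equation gives $\text{ord}_A(f_d)_i=d$ if $f_d(v_i)\neq 0$ and $\infty$ otherwise. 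Comparing these coordinatewise yields
\[
\text{ord}_A(f)=\bigoplus_{d\geq 0}\text{ord}_A(f_d);
\]
one may also see this as the equality case in the valuation inequality of Proposition~\ref{Prop_Valuation}, because in each coordinate the finite values among the $\text{ord}_A(f_d)_i$ are pairwise distinct, so the minimum defining $\oplus$ is attained at most once.

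Next I would feed in Proposition~\ref{prop_ph}. For $d>0$ it identifies ${\rm S}(X,d)=\text{ord}_A(R(d))$ with the matroidal $\mathbb{B}$-semimodule spanned by $\{d\cdot\text{Supp}(ev_{(v_1,\dots,v_r)}(g)): \text{Supp}(ev_{(v_1,\dots,v_r)}(g))\in\mathbb{B}^r\setminus\{\boldsymbol{\infty}\}\text{ minimal}\}$, i.e.\ by the $\text{ord}_A$-images of the $d$-forms of minimal nonempty support; hence each $\text{ord}_A(f_d)$ with $d>0$ is a finite $\oplus$-combination of such images. For $d=0$ we have $R(0)=F$, so $\text{ord}_A(f_0)\in\{\boldsymbol{0},\boldsymbol{\infty}\}$, where $\boldsymbol{\infty}$ is the $\oplus$-unit and $\boldsymbol{0}=\text{ord}_A(1)$ is the $\odot$-unit. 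Substituting into the displayed identity, every $\text{ord}_A(f)\in{\rm S}(X)$ is a finite $\oplus$-combination of $\text{ord}_A$-images of $d$-forms of minimal nonempty support, $d>0$, possibly together with $\boldsymbol{0}$ and $\boldsymbol{\infty}$. Since the subsemiring of $\overline{\mathbb{N}}^r$ generated by any set automatically contains $\boldsymbol{0}$ (the empty $\odot$-product) and $\boldsymbol{\infty}$ (the empty $\oplus$-sum), these $d$-forms generate ${\rm S}(X)$, as claimed.

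I expect the only real content to lie in the graded splitting step, and this is exactly where the distinguished-type hypothesis is indispensable: for general exponent vectors the weight-graded pieces $f_s^{(i)}$ depend on the branch $i$, and $\text{ord}_A(f)$ need not decompose as a tropical sum indexed by a single grading, so there would be no reduction to the degreewise Proposition~\ref{prop_ph}. A secondary point worth flagging is that the degree-zero contribution $\boldsymbol{0}$ genuinely requires the multiplicative structure---positive-degree generators cannot produce $\boldsymbol{0}$ under $\oplus$ alone---so the statement concerns the generated \emph{semiring}, not merely the generated $\mathbb{B}$-semimodule.
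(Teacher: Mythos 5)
Your proof is correct and follows essentially the same route as the paper's: decompose $f$ into its weight-homogeneous pieces $f_d$ (which coincide across all branches precisely because $m_i=m$), deduce $\text{ord}_A(f)=\bigoplus_d\text{ord}_A(f_d)$ from equation~\eqref{p_explicit}, and invoke Proposition~\ref{prop_ph} degreewise. Your explicit handling of the degree-zero piece (absorbing $\mathbf{0}$ and $\boldsymbol{\infty}$ into the generated semiring) is a small point the paper's proof glosses over, but it does not change the argument.
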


\begin{proof}
    Given $f\in F[x_1,\ldots,x_n]$, the fact that $m_i=m$ implies that $f_d^{(i)}=f_d$ for every $i=1,\ldots,r$. It follows that
    \[
    \begin{split}
    \mu_A(f)=(\sum_df_d(v_1)t_1^d,\ldots,\sum_df_d(v_r)t_r^d)=\sum_d\mu_A(f_d)
    \text{ and}\\
\text{ord}_A(f)=\bigoplus_d\text{ord}_A(\mu_A(f_d))=\bigoplus_d\text{ord}_A(f_d)=\bigoplus_i d\cdot\text{Supp}(ev_{(v_1,\ldots, v_r)}(f_d)).
\end{split}
\]
On the other hand, by Proposition \ref{prop_ph}, $\text{Supp}(ev_{(v_1,\ldots, v_r)}(f_d))$ is generated by those elements of minimal support of $\text{Supp}(ev_{(v_1,\ldots, v_r)}(R(d))$ for every integer $d \geq 0$. 
\end{proof}

Summing all the ${\rm S}(X,d)\subset \overline{\mathbb{N}}^r$ yields ${\rm S}(X)=\sum_d{\rm S}(X,d)$, where each ${\rm S}(X,d)$ is a matroidal  $\mathbb{B}$-semimodule, or a tropical linear space. This effectively concludes our discussion of the additive structure of the semiring ${\rm S}(X)$. 
In the setting of tropical schemes \cite{MR}, the authors produce a similar decomposition  $\text{trop}(I^h)=\sum_dM(I_h^d)$ into $\mathbb{B}$-semimodules $M(I_h^d)\subset \mathbb{B}[x_1,\ldots,x_n]$ of cycles (i.e., scrawls) for the tropicalization (with respect to the trivial valuation on $F$) of the homogeneization $I^h$ of an ideal $I\subset F[x_1,\ldots,x_n]$.

\medskip
However, in our case, the multiplicative structure of the semiring ${\rm S}(X)$ also plays an important r\^ole. 
Our next task is to show that \emph{absolute maxima} $a \in {\rm S}(X)$ in the sense of \cite[Sec. 3]{CH} may be reformulated in terms of the semiring order of ${\rm S}(X)$.



\begin{dfn}\label{maximal_irreducible} An element $a\in {\rm S}(X)$ is
\begin{enumerate}
    \item {\it multiplicatively irreducible} if for every factorization $a=b\odot c$, we have $a=b$ or $b=c$;
    \item {\it maximal} if whenever $a\leq b$, we have $b=\boldsymbol{\infty}$ or $b=a$;
    \item a {\it generator}, if it is both irreducible and maximal.
\end{enumerate}
\end{dfn}

\begin{lemma}\label{maximal_equivalence} An element $a \in {\rm S}(X)$ is maximal if and only if $a$ is an \emph{absolute maximum} in the sense of \cite[Sec. 3]{CH}.
\end{lemma}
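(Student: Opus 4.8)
The plan is to unwind both definitions and check that the condition ``$a$ is maximal'' (i.e. $a \le b \Rightarrow b = a$ or $b = \boldsymbol\infty$) is literally the same condition as ``$a$ is an absolute maximum'' (i.e. $F_J(a) = \emptyset$ for every proper subset $J \subsetneq I_a$), where $F_J(a) = \{b \in {\rm S}(X) : b_i > a_i \text{ for all } i \in I_a \setminus J \text{ and } b_i = a_i \text{ for all } i \in J\}$. First I would recall that since ${\rm S}(X)$ is a value semiring, every element of ${\rm S}(X)$ has support contained in $[r]$ (no branch is identically killed on a nonzero element unless that branch does not appear), and more importantly that ${\rm S}(X)$ is closed under $\oplus$; in particular, if $a \le b$ with $a, b \in {\rm S}(X)$ then one can ``zip together'' $a$ and $b$ along any set of coordinates. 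I would also note at the outset that $I_a \subseteq I_b$ whenever $a \le b$ in $\overline{\mathbb N}^r$, so the relevant comparisons only concern coordinates in $I_a$.

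The heart of the argument is the following elementary observation: an element $b \in {\rm S}(X)$ with $a \le b$, $b \ne \boldsymbol\infty$, $b \ne a$ either has $b_i > a_i$ for all $i$ in some nonempty subset and $b_i = a_i$ elsewhere on $I_a$ — but the set $J$ of coordinates where equality holds must be a \emph{proper} subset of $I_a$ (properness because $b \ne a$ forces strict inequality somewhere, and these "somewheres" all lie in $I_a$ since outside $I_a$ we automatically have $b_i \ge a_i = \infty$, hence $b_i = \infty = a_i$) — so such a $b$ lies in $F_J(a)$ for the proper subset $J = \{i \in I_a : b_i = a_i\}$. Conversely, any $b \in F_J(a)$ with $J \subsetneq I_a$ proper satisfies $a \le b$ (it agrees with $a$ on $J$ and on $[r]\setminus I_a$, and exceeds $a$ on $I_a\setminus J$), is not equal to $a$ (since $I_a \setminus J$ is nonempty), and the remaining task is to check $b \ne \boldsymbol\infty$, which may require a short argument. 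For the forward direction ($b \le$ some witness need not a priori be comparable in the ``step'' sense of $F_J$): given any $b \in {\rm S}(X)$ with $a \le b$, $b \notin\{a, \boldsymbol\infty\}$, I would set $J := \{i \in I_a : b_i = a_i\}$, which is proper in $I_a$; but $b$ itself may have some coordinates in $I_a \setminus J$ equal to $\infty$ rather than finite-but-larger. To handle this I would replace $b$ by $b' := b \oplus b''$ or rather use closure of ${\rm S}(X)$ under $\odot$ and $\oplus$ to manufacture a genuine element of $F_J(a)$ — e.g. by taking $a \oplus$ (something strictly increasing $a$ on $I_a \setminus J$), using that $a$ itself and $b$ are both in ${\rm S}(X)$.

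The main obstacle I expect is precisely this last point: the definition of $F_J(a)$ demands the \emph{finite} strict inequality $a_i < b_i < \infty$ on $I_a \setminus J$, whereas the maximality condition only forbids finite strict increases implicitly (since $b = \boldsymbol\infty$ is allowed). So to go from "not maximal" to "some $F_J(a) \ne \emptyset$" one must produce a witness with no $\infty$ entries on $I_a$; I would do this by taking a non-trivial $b \ge a$ in ${\rm S}(X)$, letting $J = \{i \in I_a: b_i = a_i\}$, and then forming $c := a \oplus (b \odot e)$ or a similar combination to truncate the $\infty$-coordinates of $b$ down to finite values that still exceed $a$ — relying on the fact that ${\rm S}(X)$ contains elements of full support $[r]$ in all sufficiently large degrees (the conductor), so one can $\odot$-multiply to fill in the needed finite coordinates. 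Conversely, from $b \in F_J(a)$ with $J$ proper one gets immediately $a \le b$, $a \ne b$, and $b \ne \boldsymbol\infty$ because $b$ agrees with $a$ (hence is finite) on the nonempty set $J$; thus $a$ fails maximality. Once this bookkeeping with supports and the conductor is set up, the equivalence falls out; I would close by remarking that the proof shows the witnessing set $J$ can always be taken to be $I_a$ minus a single element, recovering the ``one coordinate at a time'' flavour of the original \cite{CH} definition.
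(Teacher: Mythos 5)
Your proposal takes essentially the same route as the paper's proof: both directions are obtained by unwinding the two definitions and matching a witness $b\in {\rm S}(X)$ with $a\leq b$, $b\notin\{a,\boldsymbol{\infty}\}$ against membership of $b$ in $F_J(a)$ for the proper subset $J=\{i\in I_a: b_i=a_i\}$, using that $a\leq b$ forces $I_b\subseteq I_a$ so that all discrepancies occur inside $I_a$. That core argument is correct and is exactly what the paper does. However, the extra machinery you add to overcome your ``main obstacle'' is both unnecessary and, as written, incorrect. It is unnecessary because the condition $b_i>a_i$ for $i\in I_a\setminus J$ is a comparison in $\overline{\mathbb{N}}$, where $\infty>a_i$ holds whenever $a_i$ is finite (which it is, since $i\in I_a$); no finiteness of $b_i$ is demanded, so the witness $b$ itself already lies in $F_J(a)$, and neither the conductor nor closure under $\odot$ is needed. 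It is incorrect because your candidate $c=a\oplus(b\odot e)$ can never work: $\oplus$ is coordinatewise $\min$, so $a\oplus x\leq a$ in the idempotent order for every $x$, and hence $c$ cannot strictly exceed $a$ on $I_a\setminus J$. If one did want a witness with finite entries, the combination that performs the truncation is $b\oplus(a\odot e)$ with $e$ of full support and positive finite entries: it agrees with $a$ on $J$, is finite and strictly larger than $a$ on $I_a\setminus J$, and equals $\infty$ off $I_a$. Finally, your claim that $b\neq\boldsymbol{\infty}$ ``because $b$ agrees with $a$ on the nonempty set $J$'' silently assumes $J\neq\emptyset$; the paper's proof makes the same elision (``clearly $b\notin\{a,\boldsymbol{\infty}\}$''), and closing it honestly requires the convention of \cite{CH} that excludes $\boldsymbol{\infty}$ or the empty $J$.
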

\begin{proof}
Following \cite{CH}, given any $a \in {\rm S}$ and any proper subset $J$ of $\text{Supp}(a)$, we set
\[
F_J(s):=\{b \in {\rm S}: b_i>a_i \text{ for } i \in \text{Supp}(a)\setminus J \text{ and } a_j=b_j \text{ for } j \notin \text{Supp}(a)\setminus J.\]

    Suppose that $a \in {\rm S}$ is not maximal in the sense of Definition~\ref{maximal_irreducible}. Then there is some $b\notin \{a,\boldsymbol{\infty}\}$ for which $a \leq b$, and it follows that $\text{Supp}(b)\subseteq \text{Supp}(a)$.
    Furthermore, 
    the fact that $a\oplus b=a$ implies there is some $i\in \text{Supp}(b)$ for which $a_i<b_i$, which in turn means there is a proper subset $J\subset \text{Supp}(a)$ for which $b\in F_J(a)$; and thus $a$ is not an absolute maximum of ${\rm S}$ in the sense of \cite[Sec. 3]{CH}.

\medskip
    Conversely, suppose $a$ is not an absolute maximum of ${\rm S}$. Then $a\neq\boldsymbol{\infty}$ and there is some proper subset $J\subset \text{Supp}(a)$ for which $b\in F_J(a)$. Clearly $b\notin \{a,\boldsymbol{\infty}\}$ and $a\oplus b=a$, which means precisely that $a\leq b$.
\end{proof}

\begin{coro}\label{minimal_support_maxima}
Assume that $F=\ov{F}$ is algebraically closed, and that $X$ is a toric arrangement of distinguished type. For every $d>0$, the $\pi_X$-images of $d$-forms of minimal support are maxima of ${\rm S}(X)$.
\end{coro}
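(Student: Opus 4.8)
The plan is to combine three ingredients already in place: the explicit computation of $\text{ord}_A$ on $R(d)$ from Proposition~\ref{prop_ph}, the fact that $\text{Supp}(ev_{(v_1,\dots,v_r)}(R(d)))$ is a matroidal $\mathbb{B}$-semimodule (so that its minimal-support elements are the circuits of a matroid), and the equivalence between maximality in the sense of Definition~\ref{maximal_irreducible} and being an absolute maximum (Lemma~\ref{maximal_equivalence}). The subtlety is that ``maximal'' in ${\rm S}(X)$ refers to the order on the \emph{whole} semiring ${\rm S}(X)$, not just on the homogeneous piece ${\rm S}(X,d)$, so I must argue that an element $a$ of degree $d$ and minimal support cannot be dominated by \emph{any} element of ${\rm S}(X)$, including inhomogeneous ones. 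This is exactly the point where algebraic closedness of $F$ will be used.

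First I would fix $f\in R(d)$ a $d$-form of minimal nonempty support and set $a:=\text{ord}_A(f)=d\cdot\text{Supp}(ev_{(v_1,\dots,v_r)}(f))$, a homogeneous element of degree $d$ whose support $I_a$ is minimal among nonempty supports of evaluations of degree-$d$ forms. Suppose $a\le b$ for some $b\in {\rm S}(X)\setminus\{\boldsymbol\infty\}$; write $b=\text{ord}_A(g)$ for some $g\in F[x_1,\dots,x_n]$ and decompose $b=\bigoplus_e b^{(e)}$ into homogeneous pieces, with $b^{(e)}=\text{ord}_A(g_e)=e\cdot\text{Supp}(ev(g_e))$. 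Then $a\le b$ forces, for each $i\in I_a$, some homogeneous piece $b^{(e_i)}$ with $e_i\le d$ and $i\in\text{Supp}(b^{(e_i)})$; and $\text{Supp}(b)\subseteq I_a$ since $b\ge a$ means $b_i=\infty$ whenever $a_i=\infty$. The goal is to show $b=a$, i.e. that $b$ is actually homogeneous of degree exactly $d$ with the same support, and $b_i=d$ for all $i\in I_a$.

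The crux is to rule out strict domination on the support. Suppose for contradiction that $b_i>d=a_i$ for some $i\in I_a$; since $\text{Supp}(b)\subseteq I_a$ and $I_a$ is minimal, $\text{Supp}(b)=I_a$ as well (a proper nonempty subset would contradict minimality, using that $\text{Supp}(b)$, being a support of an evaluation, lies in the matroidal $\mathbb{B}$-semimodule and hence contains a circuit $\subseteq\text{Supp}(b)\subsetneq I_a$, contradicting minimality of $I_a$ — here I use that $a$ itself is a circuit support). So $b$ has support exactly $I_a$ but $b_i>d$ for some $i$. Consider the homogeneous piece $b^{(e)}$ of $b$ of \emph{minimal} degree $e$; it has $\text{Supp}(b^{(e)})\subseteq I_a$ nonempty, hence equal to $I_a$ by minimality again, hence $e\cdot\mathbf{1}_{I_a}\le b\le\dots$ actually $b\le b^{(e)}$ coordinatewise on $I_a$ gives $b_i\le e$ for $i\in I_a$, while $b\ge a$ gives $e=b_i\ge a_i=d$ for such $i$; combined with $e\le d$ (needed so that some piece reaches $i\in I_a$ with value $\le$ something dominating... ) — more cleanly: $a\le b\le b^{(e)}=e\cdot\mathbf{1}_{I_a}$ forces $d=a_i\le e$ for $i\in I_a$, and the existence of a piece of degree $\le d$ hitting each $i\in I_a$ forces $e\le d$, so $e=d$, whence $b\le d\cdot\mathbf{1}_{I_a}=a$, and with $a\le b$ we conclude $b=a$.

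The main obstacle, and where $F=\overline{F}$ enters, is the claim that $\text{Supp}(b)\subseteq I_a$ forces $\text{Supp}(b)\in\{\emptyset, I_a\}$ and that each homogeneous piece of $b$ supported in $I_a$ must be supported on \emph{all} of $I_a$: this is precisely the statement that $I_a$, being a circuit of the degree-$d$ matroid $M(R(d))$, cannot strictly contain the support of any evaluation vector of \emph{any} degree — i.e. that the matroids $M(R(e))$ for $e<d$ have no circuit inside $I_a$, and $M(R(d))$ has $I_a$ itself as its unique such circuit. For $e<d$ this can fail in general, so the real content is to invoke algebraic closedness to ensure that multiplication is ``surjective enough'': any $g_e\in R(e)$ with $\text{Supp}(ev(g_e))\subseteq I_a$ can be multiplied by a suitable monomial or form $h\in R(d-e)$ with $h(v_i)\ne 0$ for all $i\in I_a$ (possible since $F$ is infinite, in particular algebraically closed, so such a non-vanishing form of any positive degree exists), producing $g_e h\in R(d)$ with $\text{Supp}(ev(g_e h))=\text{Supp}(ev(g_e))\subseteq I_a$; minimality of $I_a$ then forces this support to be $I_a$ itself. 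Feeding this back shows every relevant homogeneous piece of $b$ has support exactly $I_a$, completing the argument above. Finally, by Lemma~\ref{maximal_equivalence}, $a$ being maximal in this sense is the same as being an absolute maximum of ${\rm S}(X)$, which is the assertion of the corollary. $\qed$
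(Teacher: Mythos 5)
The paper's own proof of this corollary is a one-line citation of \cite[Thm.~19]{CH}, so your self-contained argument is by necessity a different route; unfortunately it contains a genuine error, and the error sits exactly where the statement is delicate. You assert that $a\le b$ forces, for each $i\in I_a$, a homogeneous piece $b^{(e_i)}$ with $e_i\le d$ hitting $i$. The inequality goes the other way: since $b_i=\min\{e:g_e(v_i)\neq0\}$ and $a\le b$ means $b_i\ge a_i=d$, every homogeneous piece of a representing $g$ that is nonzero at some $v_i$ with $i\in I_a$ has degree $\ge d$, and the pieces of degree $<d$ vanish at \emph{every} $v_i$. Your pinching argument (``$e\le d$ and $e\ge d$, hence $e=d$'') therefore collapses, and your multiplication trick (multiplying $g_e$ by a nonvanishing form of degree $d-e$) is unavailable precisely for the degrees $e>d$ that actually occur: a support realized in degree $e>d$ can be strictly smaller than every minimal degree-$d$ support, and minimality of $I_a$ inside the degree-$d$ matroid says nothing about it.

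This is not a repairable slip, because the literal statement you set out to prove --- that no $b\in{\rm S}(X)\setminus\{a,\boldsymbol{\infty}\}$ satisfies $a\le b$ --- fails. For three concurrent lines in $\mb{A}^2_F$ spanned by $v_1=(1,0)$, $v_2=(0,1)$, $v_3=(1,1)$, the $1$-form $x_1-x_2$ has minimal support and $a=\text{ord}_A(x_1-x_2)=(1,1,\infty)$, while $b=\text{ord}_A\bigl(x_1(x_1-x_2)\bigr)=(2,\infty,\infty)$ satisfies $a\le b$ with $b\notin\{a,\boldsymbol{\infty}\}$. What is true, and what \cite[Thm.~19]{CH} should be read as delivering, is that $F_J(a)=\emptyset$ for every \emph{nonempty} proper $J\subset I_a$: if $b\ge a$ and $b_j=a_j=d$ for some $j\in I_a$, then the degree-$d$ piece $g_d$ of a representing $g$ satisfies $g_d(v_j)\neq0$ exactly for $j\in J$, so $\text{Supp}(ev_{(v_1,\ldots,v_r)}(g_d))=J$ is a nonempty proper subset of $I_a$, contradicting minimality of $I_a$ in degree $d$ --- a one-step argument needing no cross-degree comparison and no multiplication trick. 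In short, the corollary is only tenable for the notion of absolute maximum in which the dominating element must agree with $a$ in at least one coordinate of its support (note that Definition~\ref{maximal_irreducible}(2) and Lemma~\ref{maximal_equivalence} as stated allow $J=\emptyset$ and hence assert the stronger property, which the example above contradicts); your proposal as written establishes neither version.
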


\begin{proof}
This is an immediate consequence of \cite[Thm. 19]{CH}.
\end{proof}

\section{Applications to line arrangements}

Hereafter, $F$ denotes an algebraically closed field of characteristic that is either zero or sufficiently large, and $X\subset \mathbb{A}^n_F$ denotes an essential line arrangement 
of distinguished type induced by a matrix $A=(v_{i,j}t_i^{m_{i,j}})_{i,j}$ with $m_{i,j}=1$. In this case, $X$ determines a matroid $M_X$ of rank $n$ on the set of labels $[r]$ by choosing $\{v_i\in X_i\setminus\{0\}\::\:i=1,\ldots,r\}$, which is always simple.

\subsection{The value semiring of an essential line arrangement}
\label{Subsection_line_arr}
Let $\{e_1,\ldots,e_r\}$ denote the standard vector basis of $F^r$. In this subsection, we use the results of the preceding section to describe ${\rm S}(X)$ in some interesting cases.

\begin{ex}
\label{axes_conf}
Suppose $r=n$. In this case, $M_X$ is the uniform matroid $U_r^r$ of rank $r$ on $[r]$.
Let $A$ be the $r \times r$ matrix whose $j$-th line corresponds to a parameterization of the $j$-th branch of $X$. The isomorphism class of $X$ is clearly invariant under elementary column operations; so without loss of generality, we may assume $A$ is the $r \times r$ identity matrix. The valuative image of the $j$-th line $e_j$ of $A$ is $\ep_j \in \ov{\mb{N}}^r$, whose $j$-th entry is 1 and whose remaining entries are $\infty$. It follows that the conductor $c$ of ${\rm S}(X)$ is the all-ones vector $\mathbf{1}=(1,\dots,1)$, and that ${\rm S}(X)$ is (additively) generated by $\ep_j$, $j=1,\dots,r$. 

\emph{Indeed, for every $a=(a_1,\dots,a_r) \in \ov{\mb{N}}^r$ with $1 \leq a_i< \infty$ for every $i$, we have $a=\bigoplus_{i=1}^r a_i \ep_i$, which shows that $c \leq \mathbf{1}$ and that ${\rm S}(X)$ is generated by $\ep_j$, $j=1,\dots,r$. On the other hand, no nontrivial vector $v \in \ov{\mb{N}}^r$ strictly less than $\mathbf{1}$ may be realized as a sum of vectors $\ep_j$ in $\ov{\mb{N}}^r$; so every such $v$ belongs to $\ov{\mb{N}}^r \setminus {\rm S}(X)$. In particular, the $r-1$ vectors $(1,0,\dots,0)$, $(1,1,0,\dots,0)$, \dots, $(1,1,\dots,1,0)$ belong to $\ov{\mb{N}}^r \setminus {\rm S}(X)$ and determine the gap sequence of a \emph{saturated path} between $\mathbf{0}=(0,\dots,0)$ and $c=\mathbf{1}$; so $X$ has genus $g(X)=r-1$.}

\end{ex}

\begin{ex}\label{planar_case}
Suppose $n=2$. In this case, $M_X$ is a simple matroid of rank $2$ on $[r]$, it is thus the uniform matroid $U_r^2$. The value semiring ${\rm S}(X)$ has conductor $c=\mathbf{r-1}=(r-1,\dots,r-1)$ and genus $g(X)=\binom{r}{2}$.

\emph{Indeed, according to Corollary~\ref{minimal_support_maxima}, the semiring ${\rm S}(X)$ is generated by homogeneous degree-$d$ vectors of minimal support, i.e., vectors that are permutations of $(d,\dots,d,\infty,\dots,\infty)$ for some fixed degree $d \in \mb{N}_{>0}$, in which the number of instances of $d$ is minimal. Our first task is to explicitly identify these.}

\emph{To this end, let $t_i$ denote the local coordinate for the $i$-th branch of a (parameterization of) $X$. Applying elementary column operations if necessary, we may assume without loss of generality that the first two rows of the associated matrix $A$ are $(t_1,0)$ and $(0,t_2)$; and that every subsequent row is of the form $(\al_i t_i, \be_i t_i)$, where $\al_i, \be_i \in F^*$.}

\emph{By construction, homogeneous vectors of minimal support and degree $d$ are precisely the valuative images of degree-$d$ vector-valued polynomials in $\mu_A(x_1)=(\al_i t_i)_{i=1}^r$ and $\mu_A(x_2)=(\be_i t_i)_{i=1}^r$, where $\al_1=1$, $\be_1=0$, $\al_2=0$, $\be_2=1$, and $\al_i,\be_i \in F^*$ for every $i \geq 3$. Any such polynomial is of the form}
\[
P_d=\sum_{j=0}^d \ga_j ((\al_i t_i)_{i=1}^r)^j ((\be_i t_i)_{i=1}^r)^{d-j}= \bigg(\sum_{j=0}^d \ga_j \al_i^j \be_i^{d-j} t_i^d \bigg)_{i=1}^r.
\]
\emph{Now say $d \leq r-1$. Because the points $(\al_i, \be_i) \in F^2$, $i=1,\dots,r$ have distinct slopes, the $r \times (d+1)$ coefficient matrix $(\al_i^j \be_i^{d-j})_{i,j}$ has rank $d+1$; indeed, this follows from the fact that Vandermonde matrices are of maximal rank. Accordingly, at most $d$ of the equations $\sum_{j=0}^d \ga_j \al_i^j \be_i^{d-j}=0$ may be simultaneously satisfied without $P_d$ being the all-zeroes vector. Moreover, \emph{any} $r-d$ of these may be satisfied. It follows immediately that those homogeneous degree-$d$ vectors of minimal support in ${\rm S}(X)$ comprise all permutations of $(d,\dots,d,\infty,\dots,\infty)$ in which $d$ appears at least $r-d$ times.}

\emph{One immediate consequence is that the conductor of ${\rm S}(X)$ is $c=\mathbf{r-1}=(r-1,\dots,r-1)$. Another is that the $\binom{r}{2}$ vectors of the form $(d,\dots,d,d-1,\dots,d-1)$ featuring $i_d \in \{d,\dots,r-1\}$ instances of $d$, for every $d \in \{1,\dots,r-2\}$, determine the gap sequence of a saturated path between $(0,\dots,0)$ and $c=\mathbf{r-1}$. So $g(X)= \binom{r}{2}$ as claimed.}

  \end{ex}

\begin{ex}
    \label{Example_Variation}
Suppose $n=3$ and $r=4$; this is the smallest case in which there are distinct possibilities for $M_X$. Let $v_i\in X_i\setminus\{0\}$; and suppose that $\sum_{i=1}^4a_iv_i=0$. There are two basic possibilities: either
\begin{enumerate}
   \item $a_i\neq0$ for every $i \in \{1,2,3,4\}$, in which case there are $6$ hyperplanes, given by $\langle e_i,e_j \rangle$ for $\{i,j\}\in\binom{[4]}{2}$; or else
    \item $a_i=0$ for some $i \in \{1,2,3,4\}$. In this case there are $4$ hyperplanes, given by $\langle e_i,e_j \rangle$ for $j \in [4]-\{i\}$ and $\langle e_i,e_j,e_k \rangle$.
\end{enumerate}
In the first case, $M_X$ is the uniform matroid $U_{[4]}^3$ of rank 3 on $[4]$. In the second case, we have $M_X=U_{[i]}^1\oplus U_{[j,k,l]}^2$, where $\{i,j,k,l\}=[4]$. This division into cases reflects the dichotomy of there existing a triple of coplanar lines or not.\footnote{There is one remaining loop-less matroid on $[4]$ of rank 3, namely $U_{[i,j]}^1\oplus U_{[k,l]}^2$. However, it is not geometrically realized by an essential line arrangement because it contains a pair of parallel elements $\{i,j\}$), and therefore is not simple.}

\emph{In the first case, we choose a parameterization for which the associated matrix $A$ satisfies}
\[
\mu_A(x_1)=(t_1,0,0,\al_4 t_4), \mu_A(x_2)=(0,t_2,0,\be_4 t_4), \text{ and }\mu_A(x_3)=(0,0,t_3,\ga_4 t_4)
\]
\emph{with $\al_4, \be_4, \ga_4 \in F^*$. Following the template of the previous example, we now characterize sets of homogeneous vectors of minimal support in every finite degree. By construction, these comprise all possible valuations of vector-valued polynomials}
\[
\begin{split}
P_d&= \sum_{j_1+j_2 \leq d} \ep_{j_1,j_2} (t_1,0,0,\al_4 t_4)^{j_1} (0,t_2,0,\be_4 t_4)^{j_2} (0,0,t_3,\ga_4 t_4)^{d-j_1-j_2} \\
&=(\ep_{d,0}t_1^d, \ep_{0,d}t_2^d, \ep_{0,0} t_3^d, \sum_{j_1+j_2 \leq d}\ep_{j_1,j_2} \al_4^{j_1} \be_4^{j_2} \ga_4^{d-j_1-j_2} t_4^d)
\end{split}
\]
\emph{where $\ep_{j_1,j_2} \in F$. It follows easily that at most two (resp. three) arbitrarily-chosen components of $P_1$ (resp. $P_2$) can be made to vanish.}

\emph{Consequently $c=\mathbf{2}=(2,2,2,2)$ is the conductor of ${\rm S}(X)$; while $(2,\dots,2,1,\dots,1)$ will arise as the valuative image of $P_d$ if and only if at least two of its components are equal to 1. It follows that $(1,0,0,0), (1,1,0,0), (1,1,1,0), (2,2,2,1)$ is the gap sequence of a saturated path linking $\mathbf{0}$ and $c=\mathbf{2}$. So $g(X)=4$.}

\emph{Similarly, in the second case, we choose a parameterization for which}
\[
\mu_A(x_1)= (t_1,0,0,0), \mu_A(x_2)= (0,t_2,0,\al_4 t_4) \text{ and } \mu_A(x_3)=(0,0,t_3,\be_4 t_4)
\]
\emph{with $\al_4,\be_4 \in F^*$. Correspondingly, we have}
\[
\begin{split}
P_d &= \sum_{j_1+j_2 \leq d} \ep_{j_1,j_2} (t_1,0,0,0)^{j_1} (0,t_2,0,\be_4 t_4)^{j_2} (0,0,t_3,\ga_4 t_4)^{d-j_1-j_2} \\
&=(\ep_{d,0}t_1^d, \ep_{0,d}t_2^d, \ep_{0,0} t_3^d, \sum_{j_2=0}^d\ep_{0,j_2} \be_4^{j_2} \ga_4^{d-j_2} t_4^d)
\end{split}
\]
\emph{where $\ep_{j_1,j_2} \in F$. The same analysis operative in the first case now applies; in particular, the semiring ${\rm S}(X)$ is unchanged, and once more $g(X)=4$.}

\end{ex}



\subsection{Value semirings of (very) uniform essential line arrangements}

In this subsection, we focus on the structure of value semirings associated with essential line arrangements whose underlying matroids are uniform. An auxiliary result will play a key role. To state it, we will make use of the following device.

\begin{dfn}
Fix positive integers $d$ and $n, r \geq 2$. Given scalars $\al_{i,j} \in F$ with $i \in \{1,\dots,r\}$ and $j \in \{1,\dots,n\}$, the associated \emph{coefficient matrix} $C_d(\al_{i,j})$ is the $r \times \binom{d+n-1}{n-1}$ matrix whose $i$-th line consists of all homogeneous degree-$d$ monomials in the $\al_{i,j}$, $j=1, \dots, n$ ordered lexicographically according to the convention that $\al_{i,j_1}>\al_{i,j_2}$ whenever $j_1>j_2$.
\end{dfn}

\begin{lemma}\label{interpolation_lemma}
Fix positive integers $d$ and $n, r \geq 2$. For a generic choice of scalars $\al_{i,j} \in F$ with $i \in \{1,\dots,r\}$ and $j \in \{1,\dots,n\}$, the associated coefficient matrix $C_d(\al_{i,j})$ is of maximal rank.
\end{lemma}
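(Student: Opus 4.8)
The plan is to show that the maximal-rank locus is a nonempty Zariski-open subset of the affine space $\mathbb{A}^{rn}_F$ of parameter choices $(\al_{i,j})$; since a nonempty open set in an irreducible variety is dense, this yields the genericity statement. Openness is automatic: the rank of $C_d(\al_{i,j})$ is at least $k := \min\{r, \binom{d+n-1}{n-1}\}$ exactly when some $k \times k$ minor is nonzero, and the vanishing of all such minors is a closed condition. So the entire content is \emph{nonemptiness}: we must exhibit one choice of scalars for which $C_d$ has rank $k$, equivalently for which the $k$ rows (or columns, whichever are fewer) are linearly independent.

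First I would reduce to the case $r \le \binom{d+n-1}{n-1}$, i.e. to showing that $r$ rows can be made independent; the complementary case $r > \binom{d+n-1}{n-1}$ is handled by the same construction applied to any $\binom{d+n-1}{n-1}$ of the rows (the remaining rows are irrelevant to the rank). The $i$-th row of $C_d$ is the image of the point $[\al_{i,1}:\cdots:\al_{i,n}] \in \mathbb{P}^{n-1}$ under the degree-$d$ Veronese embedding $\nu_d : \mathbb{P}^{n-1} \hookrightarrow \mathbb{P}^{N-1}$, where $N = \binom{d+n-1}{n-1}$. Thus the rows are linearly independent precisely when the $r$ points $\nu_d(p_i)$ are linearly independent in $F^N$, i.e. when the $p_i \in \mathbb{P}^{n-1}$ impose independent conditions on degree-$d$ hypersurfaces. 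The key geometric input is that the Veronese variety $\nu_d(\mathbb{P}^{n-1})$ is nondegenerate (it spans $\mathbb{P}^{N-1}$), and more strongly that any $r \le N$ general points on it are linearly independent — a standard fact, provable by induction on $r$: a general point of an irreducible nondegenerate variety does not lie in the linear span of any fixed proper subcollection, since that span meets the variety in a proper closed subset.

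For a self-contained argument over an arbitrary field (of characteristic zero or large, as assumed), I would instead give an explicit choice, mirroring the Vandermonde arguments used in Examples \ref{planar_case} and \ref{Example_Variation}. Pick $p_i = [1 : s_i : s_i^2 : \cdots : s_i^{n-1}]$ for distinct scalars $s_1,\dots,s_r \in F$ (possible since $F$ is infinite); these lie on the rational normal curve of degree $n-1$. Then $\nu_d(p_i)$ has coordinates that are monomials of degree $d$ in $1, s_i, \dots, s_i^{n-1}$, hence are powers $s_i^{e}$ for various exponents $e$; in particular the composite $\mathbb{P}^1 \to \mathbb{P}^{n-1} \xrightarrow{\nu_d} \mathbb{P}^{N-1}$ factors as (part of) the degree-$d(n-1)$ rational normal curve in $\mathbb{P}^{N-1}$, whose points $[1:s_i:s_i^2:\cdots]$ are governed by genuine Vandermonde determinants. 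Choosing $r \le N \le d(n-1)+1$ — which holds since $\binom{d+n-1}{n-1} \le d(n-1)+1$ is false in general, so here I would instead just note that distinct $s_i$ give distinct points on a rational normal curve of degree $N-1 \ge r-1$ after composing with a suitable Veronese of $\mathbb{P}^1$, and any $r \le N$ points on a rational normal curve in $\mathbb{P}^{N-1}$ are in linearly general position. Hence the corresponding rows of $C_d$ are independent, establishing nonemptiness.

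The main obstacle is the bookkeeping in the explicit construction: verifying that the rows arising from points on the rational normal curve really do reduce to a nonvanishing Vandermonde-type determinant requires tracking which monomials (hence which exponents of $s_i$) occur and checking they are distinct across the chosen columns, and one must confirm $r \le \binom{d+n-1}{n-1}$ columns can be selected to make this work. The cleanest route, if permitted, is simply to invoke the classical fact that general points on an irreducible nondegenerate variety (here the Veronese) are linearly independent, reducing the lemma to one line; I would present the geometric argument as primary and remark that the rational-normal-curve specialization gives an elementary alternative valid whenever $\operatorname{char} F = 0$ or $\operatorname{char} F$ is large enough that the relevant Vandermonde and multinomial factors are nonzero.
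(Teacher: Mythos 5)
Your primary argument is correct and follows the same overall strategy as the paper --- identify the maximal-rank locus as the complement of the determinantal variety cut out by the $q\times q$ minors, $q=\min\bigl(r,\binom{d+n-1}{n-1}\bigr)$, so that openness is automatic and everything rests on nonemptiness --- but you go further than the paper does. The paper's proof asserts only that if $V$ is nonempty then it ``defines a proper subvariety,'' which silently skips the case that every $q\times q$ minor vanishes identically as a polynomial in the $\al_{i,j}$; that is precisely the nonemptiness you correctly isolate as the entire content of the lemma. Your Veronese argument supplies it: the rows of $C_d$ are the images $\nu_d(p_i)$, the Veronese variety is irreducible and nondegenerate in $\mb{P}^{N-1}$ with $N=\binom{d+n-1}{n-1}$, and a general point of such a variety lies outside the span of any fixed collection of fewer than $N$ points, so induction on $r$ produces $q$ independent rows. (Equivalently, the degree-$d$ monomials are linearly independent as functions on $F^n$ for $F$ infinite, so one may greedily choose $N$ points with invertible evaluation matrix.) This is the step the paper's proof should contain.

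Your secondary ``explicit'' construction, however, does not work as stated, as you yourself begin to suspect. Specializing $p_i=[1:s_i:\cdots:s_i^{n-1}]$ to the rational normal curve collapses the $N$ columns of $C_d$ to at most $d(n-1)+1$ distinct power functions $s\mapsto s^e$, so the specialized matrix has rank at most $\min\bigl(r,\,d(n-1)+1\bigr)$, which is strictly smaller than $q$ whenever $d(n-1)+1<r\leq N$. Composing with a further Veronese of $\mb{P}^1$ does not repair this: the image of $\mb{P}^1$ still spans only the $\bigl(d(n-1)+1\bigr)$-dimensional linear space containing the degree-$d(n-1)$ rational normal curve, which is a proper subspace of $F^N$ unless $n=2$ or $d=1$. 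Either drop this alternative or restrict it explicitly to the range $r\leq d(n-1)+1$; the geometric nondegeneracy argument should be the proof.
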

\begin{proof}
Let $q:=\min(r,\binom{d+n-1}{n-1})$. The coefficient matrix $C_d(\al_{i,j})$ is of maximal rank if and only if at least one of its $q \times q$ minors is nonvanishing.  Let $V=V(\al_{i,j})$ denote the determinantal variety cut out by all $q \times q$ minors. If $V$ is empty, there is nothing to prove; otherwise, $V$ defines a proper subvariety of the affine space with coordinates $\al_{i,j}$, and we conclude immediately.
\end{proof}

Lemma~\ref{interpolation_lemma} gives a partial geometric interpretation of the matroidal $\mb{B}$-semimodule ${\rm S}(X,d)$. Indeed, ${\rm S}(X,1)$ is the matroid underlying the line arrangement $X$, and is uniform if and only if the coefficient matrix $C_1(\al_{i,j})$ is of maximal rank.

\begin{dfn}
Given positive integers $n$ and $r$, let $d$ be the minimal positive integer for which $\binom{d+n-1}{n-1} \geq r$. An essential line arrangement $X$ with $r$ branches and embedding dimension $d$
is \emph{very uniform} if the associated coefficient matrices $C_e(\al_{i,j})$ are of maximal rank in every degree $1 \leq e \leq d$.
\end{dfn}

With Lemma~\ref{interpolation_lemma} in hand, determining the value semiring ${\rm S}(X)$ of a very uniform essential line arrangement with $r$ branches and embedding dimension $n$ in any particular case is straightforward. 
Namely, Lemma~\ref{interpolation_lemma} implies that whenever $\binom{d+n-1}{n-1} \geq r$, the vector-valued polynomial $P_d$ introduced in the previous subsection assumes as values arbitrary permutations of $(d,0,\dots,0)$. 
On the other hand, when $\binom{d+n-1}{n-1} < r$, $P_d$ assumes arbitrary permutations of $(d,\dots, d, 0, \dots, 0)$, in which the number of instances of $d$ is at least $r-1-\binom{n+d-1}{n-1}$. The following consequences for ${\rm S}(X)$ are immediate.

\begin{thm}\label{value_semiring_very_uniform_case}
Given positive integers $n$ and $r$, let $X$ denote a very uniform essential line arrangement with $r$ branches and embedding dimension $n$. Let $d$ be the minimal positive integer for which $\binom{d+n-1}{n-1} \geq r$; the conductor ${\rm S}(X)$ is then $c=\mathbf{d}=(d,\dots,d)$. On the other hand, for every positive integer $d$ for which $\binom{d+n-1}{n-1} < r$, arbitrary permutations of the vector $(d,\dots,d,d-1,\dots,d-1)$ featuring $i_d \in \{d,\dots,r-1\}$ instances of $d$, is a gap of ${\rm S}(X)$; and conversely, every gap is of this form.
\end{thm}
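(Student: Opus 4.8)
The plan is to deduce Theorem~\ref{value_semiring_very_uniform_case} directly from the maximal-rank input of Lemma~\ref{interpolation_lemma}, combined with the additive generation result Theorem~\ref{semiring_generation_theorem}, by mimicking verbatim the argument already carried out in Examples~\ref{planar_case} and \ref{Example_Variation}. Concretely, I would first set up the normal form for the parameterizing matrix $A$: after elementary column operations one may assume the first $n$ branches realize the "standard" uniform configuration, so that $\mu_A(x_j) = (\dots)$ has $t_i^1$ in slot $i$ for $i=j$ among the first $n$ coordinates and generic nonzero entries $\al_{i,j} t_i$ in the remaining coordinates $i > n$. Then the degree-$d$ part $P_d$ of a polynomial in $x_1,\dots,x_n$ has $i$-th component $\bigl(\sum_{|J|=d} \ep_J \al_{i}^{J}\bigr) t_i^d$, whose coefficient vector over the index set of monomials is exactly the $i$-th row of the coefficient matrix $C_d(\al_{i,j})$ (for $i>n$; the first $n$ rows are the standard basis vectors picking out pure powers).

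Next I would extract the combinatorial consequence. Writing $q = \min\bigl(r, \binom{d+n-1}{n-1}\bigr)$, maximal rank of $C_d$ means: the set of supports $\mathrm{Supp}(ev(P_d)) \in \mathbb{B}^r$ realizable as $P_d$ ranges over $R(d)$ consists of exactly those subsets of $[r]$ of cardinality $\geq r - q + 1$ (together with $\boldsymbol{\infty}$), equivalently, the vanishing locus of at most $q-1$ of the $r$ linear conditions $\sum_J \ep_J \al_i^J = 0$ can be imposed simultaneously and every such choice is attainable; by Proposition~\ref{prop_ph} the corresponding elements of ${\rm S}(X,d)$ are precisely the permutations of $(d,\dots,d,\infty,\dots,\infty)$ with at least $r-q+1$ entries equal to $d$. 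When $\binom{d+n-1}{n-1} \geq r$ we have $q = r$, so every nonempty support occurs, in particular $\mathbf{d} \in {\rm S}(X)$ and every $a$ with $\mathbf{d} \leq a < \boldsymbol{\infty}$ is a sum of such homogeneous pieces (write $a = \bigoplus_i a_i \ep_i'$-style using the degree-$d$ generators of full support minus one coordinate), giving $c({\rm S}(X)) \leq \mathbf{d}$; minimality of $d$ shows no coordinate can be lowered, so $c = \mathbf{d}$.

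For the gap characterization I would argue as in Example~\ref{planar_case}. By Theorem~\ref{semiring_generation_theorem}, ${\rm S}(X) = \sum_d {\rm S}(X,d)$, and a vector $a$ with $\boldsymbol{0} \leq a < \mathbf{d}$ lies in ${\rm S}(X)$ iff it is an $\oplus$-sum of homogeneous vectors of the form just described in degrees $e \leq \max_i a_i$. A short computation with the product order shows that such an $a$ (with finitely many coordinates equal to some value $d-1$ and the rest equal to $d$, i.e.\ a permutation of $(d,\dots,d,d-1,\dots,d-1)$) is expressible this way precisely when the number $i_d$ of coordinates equal to $d$ is large enough — specifically when $i_d \geq d$, since the degree-$d$ minimal-support generators available have $\geq r - \binom{d+n-1}{n-1} + 1$ coordinates equal to $d$ and we are in the regime $\binom{d+n-1}{n-1} < r$ — and a parity/counting check against $i_d \leq r-1$ (a genuine permutation, not the constant vector $\mathbf{d}$ which already lies in ${\rm S}(X)$) pins down the range $i_d \in \{d,\dots,r-1\}$. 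Conversely any $a \in \overline{\mathbb{N}}^r \setminus {\rm S}(X)$ below the conductor must fail to be such a sum, and unwinding the support conditions forces $a$ into exactly this shape. The main obstacle I anticipate is the bookkeeping in this last paragraph: verifying that no element of ${\rm S}(X)$ sits strictly between the listed gaps and that the listed vectors genuinely form the gap sequence of a saturated path between $\boldsymbol{0}$ and $c = \mathbf{d}$ (so that the count is complete) requires carefully tracking how $\oplus$-sums of homogeneous minimal-support vectors in several degrees interact — the degree-$e$ pieces for $e < d$ can contribute entries $< d-1$, so one must rule out "shortcuts," and it is here that the full strength of maximal rank of $C_e$ in every degree $e \leq d$ (not just $e = d$) is used.
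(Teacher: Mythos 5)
Your overall strategy --- normal form for $A$, identifying the degree-$e$ evaluation with the rows of $C_e(\al_{i,j})$, translating maximal rank into a description of the realizable supports in each degree, then doing the conductor and gap bookkeeping as in Example~\ref{planar_case} --- is exactly the route the paper takes (its ``proof'' is essentially the paragraph preceding the theorem plus the template of Examples~\ref{planar_case} and \ref{Example_Variation}), and your treatment of the conductor is fine. The problem is that the ``short computation'' you defer in your last paragraph is where all the content sits, and the justification you sketch for the threshold $i_d\geq d$ looks at the wrong degree. If $a$ is a permutation of $(d,\dots,d,d-1,\dots,d-1)$ and $a=\bigoplus_e e\cdot S_e$ with each $S_e\in\text{Supp}(ev_{(v_1,\ldots,v_r)}(R(e)))$, then $S_e=\emptyset$ for $e<d-1$ and $S_{d-1}$ must equal \emph{exactly} the set of $(d-1)$-valued coordinates, since no other degree can contribute the value $d-1$. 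Hence $a\in{\rm S}(X)$ if and only if that set, of cardinality $r-i_d$, is a realizable degree-$(d-1)$ support, i.e.\ if and only if $r-i_d\geq r-\binom{d+n-2}{n-1}+1$; the gap criterion is therefore $i_d\geq\binom{d+n-2}{n-1}$, governed by the degree-$(d-1)$ coefficient matrix, not by the degree-$d$ generators you cite. This coincides with $i_d\geq d$ only when $n=2$. Already for $n=3$, $r=7$, $d=2$ the vector $(2,2,1,1,1,1,1)$ has $i_2=2\geq d$, yet it lies in ${\rm S}(X)$: take $S_1=\{3,\dots,7\}$, which is realizable because a generic linear form may be chosen to vanish at exactly $v_1,v_2$, and superpose a quadric nonvanishing at every $v_i$. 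So the computation, carried out honestly, does not produce the stated range $i_d\in\{d,\dots,r-1\}$ for $n>2$; it instead shows the statement (and the paper's claim that it is ``immediate'') needs correction outside the planar case.

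A secondary caveat, shared with the paper: to conclude that \emph{every} subset of cardinality $\geq r-q+1$ occurs as a support you need every $q\times q$ minor of $C_e$ to be nonsingular (i.e.\ uniformity of the row-space matroid), not merely $\operatorname{rank}C_e=q$; maximal rank alone only guarantees \emph{some} support of that cardinality. This is harmless when $\binom{e+n-1}{n-1}\geq r$, where maximal rank means surjectivity of evaluation, but it matters precisely in the sub-maximal degrees that control the gaps.
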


While Theorem~\ref{value_semiring_very_uniform_case} gives a complete characterization of ${\rm S}(X)$, the combinatorial structure of ${\rm S}(X)$ requires further elucidation whenever $n>2$. One would hope for explicit bounds on the genus of ${\rm S}(X)$.

\subsection{Conditions on rational curves imposed by $r$-tuple points}

In this subsection, we apply the results of the preceding subsections towards the dimension theory of {\it Severi varieties} of rational curves with singleton singular loci that are multiple points. We abusively conflate rational curves with their parameterizing morphisms.

\begin{dfn}
Given a reduced and irreducible algebraic curve $C \sub \mb{P}^n$, we say that a point $p \in C$ is an \emph{$r$-tuple point of embedding dimension $\ell$} if the tangent lines to the branches of $C$ in $p$ determine an essential line arrangement of embedding dimension $\ell$.
\end{dfn}

Given positive integers $d$ and $n$, let $M^n_d$ denote the space of morphisms $f: \mb{P}^1 \ra \mb{P}^n$ of degree $d$.

\begin{lemma}\label{codimension_lemma}
Assume that $d>r$. The Severi subvariety of curves with singleton singular loci that are $r$-tuple points of embedding dimension $\ell$ has codimension at most $(n-\ell)(r-\ell)+(r-1)n-r$ in $M^n_d$.
\end{lemma}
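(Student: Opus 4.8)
The plan is to stratify the construction of a degree-$d$ rational curve $f:\mathbb{P}^1 \ra \mathbb{P}^n$ with a prescribed $r$-tuple point of embedding dimension $\ell$, and to count parameters for each stratum. First I would normalize by fixing the $r$ preimages $t_1,\dots,t_r \in \mathbb{P}^1$ of the singular point and the target point $p \in \mathbb{P}^n$; this costs nothing in the codimension count because translating the singular point and reparameterizing $\mathbb{P}^1$ both act freely (modulo the finite-dimensional $\mathrm{PGL}_2 \times \mathrm{PGL}_{n+1}$ ambiguity, which is absorbed into the additive constants). A degree-$d$ morphism is given by an $(n+1)$-tuple of degree-$d$ forms on $\mathbb{P}^1$ up to scalar, so $\dim M^n_d = (n+1)(d+1)-1 = (d+1)n + d$. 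The conditions ``$f(t_i)=p$ for $i=1,\dots,r$'' impose, generically, $rn$ independent linear conditions on the coefficients (each point imposing $n$ conditions since $\mathbb{P}^n$ has dimension $n$).

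Next I would account for the tangent-direction data. Requiring that the $r$ branches through $p$ span exactly an $\ell$-plane rather than all of $\mathbb{P}^n$ is the key extra constraint. The $r$ tangent vectors $f'(t_i)$ (well-defined in $T_p\mathbb{P}^n \cong F^n$ once $p$ is fixed and $f(t_i)=p$) must lie in a common $\ell$-dimensional subspace $V \subset F^n$. Parametrizing the choice of $V$ costs $\ell(n-\ell)$ (the Grassmannian $G(\ell,n)$), while forcing each of the $r$ tangent vectors into $V$ costs $r(n-\ell)$ conditions; the net cost of the span condition is therefore $r(n-\ell) - \ell(n-\ell) = (n-\ell)(r-\ell)$. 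Combining: the locus in question has codimension at most $rn - n + (n-\ell)(r-\ell)$, where the $-n$ comes from the fact that fixing $p$ itself was a choice, not a condition — equivalently, we only need $f(t_i) = f(t_1)$ for $i \geq 2$, which is $(r-1)n$ conditions, and then the curve must actually be singular there rather than smooth, which is where the tangent analysis enters. Reconciling the bookkeeping, one arrives at $(n-\ell)(r-\ell) + (r-1)n - r$; the final $-r$ should come from the reparametrization freedom in the choice of the $r$ marked points $t_i \in \mathbb{P}^1$ (an $r$-dimensional family already present in $\overline{M}_{0,r}$-type data that does not change the isomorphism class of the embedded curve), which lowers the effective codimension by $r$ relative to the naive count.

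The main obstacle I anticipate is the genericity/transversality claim: the stated bound is an inequality (``codimension at most''), so strictly I only need to exhibit one component of the expected dimension, or equivalently bound the codimension from above by producing a parameter space of the claimed dimension that dominates the Severi locus. The cleanest route is to build such a curve explicitly: pick a general essential line arrangement of embedding dimension $\ell$ through $p$, use Lemma~\ref{interpolation_lemma} (or the osculation arguments from the very-uniform case) to produce degree-$d$ forms realizing those tangent directions with the prescribed incidences — here $d>r$ guarantees enough degrees of freedom, since a single coordinate function of degree $d$ has $d+1 > r+1$ coefficients and can be prescribed at $r$ points together with a derivative — and then verify that the family obtained by varying all the free parameters (the line arrangement within its $(n-\ell)(r-\ell)+\dots$-dimensional family, the $t_i$, and the ``interior'' coefficients of $f$) has the asserted dimension. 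The delicate point is ensuring the singular locus remains a \emph{singleton}, i.e. that no additional unexpected singularities or collisions of branches occur, which is an open condition and hence generically satisfied once one point of the stratum is exhibited; I would handle this last, as it is a standard semicontinuity argument rather than a computation.
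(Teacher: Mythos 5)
Your proposal is correct and follows essentially the same route as the paper: fix the $r$ preimages and the target, count $rn$ incidence conditions (independent for $d>r$ by a Vandermonde argument), add the codimension $(n-\ell)(r-\ell)$ of the rank-$\ell$ condition on the $r\times n$ matrix of branch tangent vectors (which you derive via the Grassmannian incidence correspondence where the paper cites the generic determinantal count), and then recover $n+r$ dimensions by letting the target and the preimages vary. The only wobble is your opening claim that fixing the preimages and target ``costs nothing'' — it is an $(n+r)$-dimensional normalization, which is exactly why the final subtraction of $n+r$ is needed — but your end bookkeeping reconciles to the paper's count.
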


\begin{proof}
Fix a choice of $r$ preimages $p_1,\dots,p_r$ and target $q$ of an $r$-tuple point in the image of a morphism $f=(f_0,\dots,f_n): \mb{P}^1 \ra \mb{P}^n$; doing so imposes $rn$ independent linear conditions on the coefficients of the parameterizing functions $f_i$. Explicitly, assume without loss of generality that $q=(1:0:\cdots:0)$; that $p_j$ corresponds to $t=\al_j \in F \cup \{\infty\}$ with respect to an affine coordinate $t$ on $\mb{P}^1$; and that $f_i(t)=\sum_{k=0}^d a_{i,k} t^k$ for every $i=1,\dots,n$. Whenever $\al_j \neq \infty$ (resp., $\al_j=\infty$), the $r$-tuple point imposes the vanishing condition $\sum_{k=0}^d a_{i,k} \al_j^k=0$ (resp., $a_{i,d}=0$) on the coefficients of each parameterizing function $f_i$, $i=1,\dots,n$. The fact that these conditions are independent as $j=1,\dots,r$ varies follows from the fact that the associated coefficient matrix whose $j$-th line is $(\al_j^d,\dots,\al_j,1)$ (resp., $(0,\dots,0,1)$) is essentially a (submatrix of a) Vandermonde matrix, and therefore of maximal rank, whenever $d>r$.

\medskip
Similarly, whenever $\al_j \neq \infty$ (resp., $\al_j=\infty$), the tangent vector to the $j$-th branch of $f$ in $q$ in the affine chart where the zeroth homogeneous coordinate of the ambient $\mb{P}^n$ is nonvanishing is $(\sum_{k=0}^{d-1} (d-k) a_{1,k}\al_j^{d-1-k},\dots,\sum_{k=0}^{d-1} (d-k) a_{n,k}\al_j^{d-1-k})$ (resp., $(\sum_{k=1}^d k a_{1,k}, \dots, \sum_{k=1}^d k a_{n,k})$). That the corresponding line arrangement should have embedding dimension $\ell$ amounts to requiring that the $r \times n$ matrix whose $j$-th row is the tangent vector to the $j$-th branch of $f$ in $q$ have rank $\ell$. For a {\it generic} $r \times n$ matrix, the associated codimension is $(n-\ell)(r-\ell)$; see, e.g., \cite[Ch. II, Sec. 2]{ACGH}. So this is an upper bound on the number of algebraically independent conditions imposed by the the requirement that an $r$-tuple multiple point with fixed preimages and target have embedding dimension $\ell$.

\medskip
Finally, allowing preimages and target of our $r$-tuple point to vary, we lose $n+r$ degrees of freedom, and the required estimate follows immediately.
\end{proof}


As a corollary, we obtain infinitely many interesting (and new) examples of Severi varieties of unisingular rational curves of unexpectedly large dimension. 

\begin{coro}\label{excess_severi_examples}
Assume that $d>r$, and that $r \gg 0$. The Severi subvariety of curves with singleton singular loci that are $r$-tuple points of embedding dimension $2$ has codimension strictly less than $(n-2)g$ in $M^n_d$, where $g$ is the delta-invariant of the corresponding planar $r$-tuple multiple point; in particular, this Severi variety is reducible.
\end{coro}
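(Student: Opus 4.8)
\medskip
The plan is to combine the codimension bound of Lemma~\ref{codimension_lemma} with the genus computation of Example~\ref{planar_case}, reduce the required inequality to an elementary comparison of a linear and a quadratic polynomial in $r$, and then deduce reducibility from the classical ``expected codimension'' of Severi varieties in $\mb{P}^n$. First I would set $\ell=2$ in Lemma~\ref{codimension_lemma}: the Severi subvariety $V$ under consideration has codimension at most
\[
c_0:=(n-2)(r-2)+(r-1)n-r=(2n-3)r-3n+4
\]
in $M^n_d$, which is \emph{linear} in $r$. By Example~\ref{planar_case} (and the remarks in the introduction) the delta-invariant of an $r$-tuple point of embedding dimension $2$ equals $g=\binom{r}{2}$, so that $(n-2)g=\tfrac12(n-2)r(r-1)$ is \emph{quadratic} in $r$ with positive leading coefficient once $n\geq 3$ (for $n=2$ the statement is vacuous). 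A short computation gives the factorisation
\[
(n-2)g-c_0=\tfrac{n-2}{2}\,(r-2)\Bigl(r-\tfrac{3n-4}{n-2}\Bigr),
\]
which is strictly positive as soon as $r>\tfrac{3n-4}{n-2}=3+\tfrac{2}{n-2}$, hence for every $r\geq 6$ and every $n\geq 3$. This already yields $\mathrm{codim}\,V<(n-2)g$ for $r\gg 0$.

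For the reducibility statement, I would exhibit $V$ as a proper subvariety of the Severi variety $\Sigma_g\subset M^n_d$ of degree-$d$ rational curves whose image has arithmetic genus $g$ (equivalently, whose singularities carry total delta-invariant $g$): a curve with a single singular point that is an $r$-tuple point of embedding dimension $2$ has precisely this arithmetic genus, so $\overline{V}\subseteq\Sigma_g$. Now $(n-2)g$ is the expected codimension of $\Sigma_g$, and for $d$ sufficiently large this expected value is attained along an irreducible component of $\Sigma_g$, namely the closure of the locus of $g$-nodal rational curves, every node imposing an independent condition of codimension $n-2$ (cf.\ \cite[Ch.\ II]{ACGH} and the deformation theory of nodal curves). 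Since $\mathrm{codim}\,V<(n-2)g$, the subvariety $V$ cannot be contained in this component; hence $\Sigma_g$ has at least two irreducible components, i.e.\ it is reducible. (If one prefers to retain the singleton-singular-locus constraint, one compares $V$ instead with the locus inside $\Sigma_g$ of curves of arithmetic genus $g$ contained in a fixed hyperplane $\mb{P}^{n-1}\subset\mb{P}^n$, whose codimension grows with $d$ and so eventually exceeds $c_0$, producing a second component by the same codimension dichotomy.)

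I expect the last step to be the main obstacle, in two respects. First, one must fix the precise meaning of ``this Severi variety'' and confirm that the chosen ambient variety genuinely possesses a component of the expected codimension $(n-2)g$; this forces one to read the hypothesis $d>r$ as ``$d$ large enough, relative to $r$ and $n$'' — large enough that $g=\binom{r}{2}$ general nodes impose independent conditions on a degree-$d$ rational curve, and that the non-degenerate stratum of $V$ is non-empty. Second, one should check that the stratum of $V$ witnessing the small codimension is not absorbed by the nodal component, which is automatic from the strict inequality $\mathrm{codim}\,V<(n-2)g$. Granting this classical input, the codimension comparison established above carries the whole argument, and nothing further is needed beyond the elementary inequality and the nodal-curve count.
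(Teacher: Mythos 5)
Your proof is correct and takes essentially the same route as the paper's: specialize Lemma~\ref{codimension_lemma} to $\ell=2$, compare the resulting bound (linear in $r$) with $(n-2)\binom{r}{2}$ using the genus computation of Example~\ref{planar_case}, and deduce reducibility by comparing against the $g$-nodal locus of codimension $(n-2)g$. Your explicit factorization even sharpens the paper's ``$r\gg 0$'' to an effective bound ($r\geq 6$ for all $n\geq 3$), and your elaboration of the reducibility step only fills in details the paper leaves implicit.
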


\begin{proof}
Indeed, suppose $\ell=2$. According to Lemma~\ref{codimension_lemma}, the Severi variety of unisingular rational curves with planar $r$-tuple points is of codimension at most $(r-2)(n-2)+(r-1)n-r$, and will have excess dimension whenever the latter quantity is strictly less than $(n-2)g$, the codimension of the (closure of the) locus of $g$-nodal rational curves; see, e.g., the introduction of \cite{CLV}. But according to Example~\ref{planar_case} we have $g=\binom{r}{2}$; while for every fixed positive integer $n \geq 3$ we have
\[
(r-2)(n-2)+(r-1)n-r < (n-2) \binom{r}{2}
\]
for $r \gg 0$. Reducibility of the Severi variety follows immediately.
\end{proof}

\section*{Acknowledgements}
Parts of this work were completed while during visits of C.G. to IMECC-Unicamp in 2022, and of E.C. to Cinvestav-IPN in 2023. We thank both of these institutions for their hospitality and financial support. We also thank Marcelo Escudeiro Hernandes for illuminating conversations and an invitation to speak on related work.

\end{document}